\newtheorem{thm}[equation]{Theorem}
\numberwithin{equation}{section}
\newtheorem{cor}[equation]{Corollary}
\newtheorem{lem}[equation]{Lemma}
\newtheorem{defin}[equation]{Definition}
\newtheorem{prop}[equation]{Proposition}
\begin{document}
\raggedbottom \voffset=-.7truein \hoffset=0truein \vsize=8truein
\hsize=6truein \textheight=8truein \textwidth=6truein
\baselineskip=18truept
\def\mapright#1{\ \smash{\mathop{\longrightarrow}\limits^{#1}}\ }
\def\ss{\smallskip}
\def\ssum{\sum\limits}
\def\dsum{{\displaystyle{\sum}}}
\def\la{\langle}
\def\ra{\rangle}
\def\on{\operatorname}
\def\o{\on{od}}
\def\a{\alpha}
\def\bz{{\Bbb Z}}
\def\eps{\epsilon}
\def\br{{\bold R}}
\def\bc{{\bold C}}
\def\bN{{\bold N}}
\def\nut{\widetilde{\nu}}
\def\tfrac{\textstyle\frac}
\def\product{\prod}
\def\b{\beta}
\def\G{\Gamma}
\def\g{\gamma}
\def\zt{{\Bbb Z}_2}
\def\zth{{\bold Z}_2^\wedge}
\def\bs{{\bold s}}
\def\bg{{\bold g}}
\def\bof{{\bold f}}
\def\bq{{\bold Q}}
\def\be{{\bold e}}
\def\line{\rule{.6in}{.6pt}}
\def\xb{{\overline x}}
\def\xbar{{\overline x}}
\def\ybar{{\overline y}}
\def\zbar{{\overline z}}
\def\ebar{{\overline \be}}
\def\nbar{{\overline n}}
\def\fbar{{\overline f}}
\def\Ubar{{\overline U}}
\def\et{{\widetilde e}}
\def\ni{\noindent}
\def\ms{\medskip}
\def\ehat{{\hat e}}
\def\xhat{{\widehat x}}
\def\nbar{{\overline{n}}}
\def\minp{\min\nolimits'}
\def\N{{\Bbb N}}
\def\Z{{\Bbb Z}}
\def\Q{{\Bbb Q}}
\def\R{{\Bbb R}}
\def\C{{\Bbb C}}
\def\el{\ell}
\def\mo{\on{mod}}
\def\dstyle{\displaystyle}
\def\ds{\dstyle}
\def\Remark{\noindent{\it  Remark}}
\title
{For which $p$-adic integers $x$ can $\dstyle\sum_k\tbinom xk^{-1}$ be defined?}
\author{Donald M. Davis}
\address{Department of Mathematics, Lehigh University\\Bethlehem, PA 18015, USA}
\email{dmd1@lehigh.edu}
\date{August 1, 2012}

\keywords{binomial coefficients, p-adic integers}
\thanks {2000 {\it Mathematics Subject Classification}:
05A10, 11B65, 11D88.}

\maketitle
\begin{abstract} Let $f(n)=\sum_k\binom nk^{-1}$. First, we show that  $f:\N\to\Q_p$  is nowhere continuous
in the $p$-adic topology. If $x$ is a $p$-adic integer, we say that $f(x)$ is $p$-definable if $\lim f(x_j)$ exists in $\Q_p$,
where $x_j$ denotes the $j$th partial sum for $x$. We prove that $f(-1)$ is $p$-definable for all primes $p$, and if $p$ is  odd, then
$-1$ is the only element of $\Z_p-\N$ for which $f(x)$ is $p$-definable. For $p=2$, we
show that if $k$ is a positive integer, then $f(-k-1)$ is not 2-definable,
but that if the 1's in the binary expansion of $x$ are eventually very sparse, then $f(x)$ is 2-definable.

Some of our proofs require that $p$ satisfy one of two conditions. There are three small primes which do not satisfy the relevant
condition, but our theorems can be proved directly for these primes. No other prime less than 100,000,000 fails to satisfy the
conditions.
\end{abstract}
\section{Statement of results}\label{intro}
Let $\N\subset\Z_p\subset\Q_p$ denote the natural numbers (including 0), $p$-adic integers, and $p$-adic numbers, respectively, with metric $d_p(x,y)=p^{-\nu_p(x-y)}$.
Here and throughout, $\nu_p(-)$ denotes the exponent of $p$ in a rational or $p$-adic number.

The function $f:\N\to\Q_p$ defined by
$$f(n)=\sum_{k=0}^n\tbinom nk^{-1}$$
has been studied in \cite{Rock}, \cite{Sury}, \cite{YZ}, and \cite{YZ2}. Throughout this paper, $f$ will always refer to this function.
Cursory calculations suggested that perhaps this $f$ might
be continuous in the 2-adic topology and extendable over $\zt$. For example, {\tt Maple} computes
\begin{prop} If $\nu_2(m+2)=\nu_2(n+2)\ge4$ and $m,n<8000$, then
$$\nu_2(f(m)-f(n))=\nu_2(m-n)+1-2\nu_2(m+2).$$
If $n\equiv14\mod 16$ and $n<70,000$, then $\nu_2(f(n))=1-\nu_2(n+2)$.\end{prop}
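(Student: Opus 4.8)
Because the assertion comes with explicit numerical bounds---$m,n<8000$ in the first part and $n<70{,}000$ in the second---it is ultimately a \emph{finite} verification, so the plan is to evaluate the relevant $2$-adic valuations directly and check the two closed forms. The condition $\nu_2(m+2)\ge4$ is just $m\equiv14\pmod{16}$, so in the first part I would run over the $500$ values of $n<8000$ with $n\equiv14\pmod{16}$, group them by the common value $v=\nu_2(n+2)$, and test $\nu_2(f(m)-f(n))=\nu_2(m-n)+1-2v$ on each pair in a group; the second part is then the diagonal case $\nu_2(f(n))=1-\nu_2(n+2)$. The only real question is how to carry out the evaluation efficiently and, more importantly, how to be sure the computed valuation is the \emph{true} one rather than an artifact of cancellation.

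The device that both speeds the computation and explains the formulas is the classical identity (in the cited literature)
\[
f(n)=\frac{n+1}{2^{n+1}}\sum_{k=1}^{n+1}\frac{2^k}{k},
\]
together with the $2$-adic fact $\sum_{k\ge1}2^k/k=0$ in $\Q_2$ (the series converges because $\nu_2(2^k/k)=k-\nu_2(k)\to\infty$, and its sum equals the $2$-adic logarithm of $-1$, which vanishes). Combining these and reindexing gives the clean form
\[
f(n)=-(n+1)\sum_{j\ge1}\frac{2^j}{\,n+1+j\,},
\]
so that $\nu_2(f(n))$ is governed by the tail sum on the right. For $n$ even the leading ($j=1$) term is $-2(n+1)/(n+2)$, of valuation $1-\nu_2(n+2)$; when it strictly dominates, the second formula follows at once. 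For the first formula one subtracts the two leading terms and finds
\[
-2\Bigl(\tfrac{m+1}{m+2}-\tfrac{n+1}{n+2}\Bigr)=\frac{-2(m-n)}{(m+2)(n+2)},
\]
of valuation $1+\nu_2(m-n)-2v$, exactly as claimed.

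Thus the whole matter reduces to showing that the $j=1$ term strictly dominates every later term, and this is precisely where the difficulty---and the need for a finite range---lives. A later term $2^j/(n+1+j)$ can tie or beat the leading one when $n+1+j$ is an unusually high power of $2$ (so that $\nu_2(n+1+j)$ is large), and in the difference $f(m)-f(n)$ the leading valuation $1+\nu_2(m-n)-2v$ can itself climb into the range of the $j\ge2$ corrections when $\nu_2(m-n)$ is close to $2v$. One checks that such coincidences force $n$ to sit just below a power of $2$ whose exponent exceeds roughly $\nu_2(n+2)+17$, hence to lie well beyond $70{,}000$; inside the stated range the leading term always wins.

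I therefore expect the main obstacle to be not any single arithmetic step but the bookkeeping that guarantees domination throughout the range. In practice I would evaluate the reindexed tail modulo a high power of $2$, tracking each term's valuation $j-\nu_2(n+1+j)$ separately, with precision chosen generously past the predicted valuation and the reported answer reconfirmed to be stable as the precision is increased---this is what rules out the possibility that an observed valuation is merely a lower bound masked by truncation.
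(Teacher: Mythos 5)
Your proposal is correct, and its computational core coincides with what the paper actually offers: the paper gives no proof of this Proposition beyond the phrase ``{\tt Maple} computes,'' so a direct finite verification over the stated ranges \emph{is} the paper's argument, and your plan for carrying it out (grouping the $n\equiv14\pmod{16}$ by $v=\nu_2(n+2)$ and testing pairs within a group) is exactly that. What you add is genuinely new relative to the paper. The reindexed identity $f(n)=-(n+1)\sum_{j\ge1}2^j/(n+1+j)$, obtained from the classical closed form $f(n)=\frac{n+1}{2^{n+1}}\sum_{k=1}^{n+1}2^k/k$ together with $\sum_{k\ge1}2^k/k=-\log(-1)=0$ in $\Q_2$, is correct, and since $n+1$ is odd here the $j$th term has valuation exactly $j-\nu_2(n+1+j)$. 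This buys two things the paper does not provide: it makes the truncation rigorous (each tail term's valuation is at least $j-\log_2(n+1+j)$, so one does not need to appeal to ``stability under increased precision''), and it explains both closed forms as the $j=1$ contributions while locating precisely where they must eventually fail, namely when some $n+1+j$ is an enormous power of $2$, i.e.\ $n=2^L-k-1$ with $k$ small --- exactly the family the paper uses in Theorem \ref{2^e-k} to show the apparent continuity is illusory. (Indeed your term $j=k$ then has valuation $\nu_2(2^L-k)+k-L=\nu_2(k)+k-L$, recovering Theorem \ref{2^e-k}(a).) The only soft spot is the sentence ``one checks that such coincidences force $n$ to sit just below a power of $2$\dots,'' which is a sketch rather than a proof of domination throughout the range; but since you fall back on the finite computation for the actual verification, nothing essential is missing.
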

If this persisted without the bounds, then $f$ would extend to a continuous function on $\zt\cap\{x:x\equiv14\ (16),\ x\ne-2\}$.
However these bounds are not large enough to reveal the problems that can occur.

For technical reasons, some of our results involve the notion of Wieferich primes.
Recall that a {\it Wieferich prime} $p$ is one for which $p^2$ divides $2^{p-1}-1$. The only known Wieferich primes are 1093 and 3511.
Let $LW$ denote the set of Wieferich primes greater than 3511.  The name of this set refers to ``large Wieferich." As of May 2012,
it was known that $LW$ contains no integers less than $17\cdot10^{15}$.(\cite{wiki})

Others of our results require that $p$ satisfies that for all $n$ such that $1\le n\le p-2$, $\nu_p(f(n))\le 1$.  We will call such primes {\it good}.
Paul Zimmermann has tested all $p<10^8$
and found that in this range there is only one case, $p=23$ and $n=12$, in which $\nu_p(f(n))>1$, with $\nu_{23}(f(12))=2$.  Thus all primes less than $100,000,000$ are good except for $p=23$.

We will prove the following theorem in Section \ref{pfsec}. Its corollary is the first of our main results.
\begin{thm}\label{2^e-k} a. If $k\ge1$ and $e>\max\{1,j+\nu_2(j):0<j<k\}$, then
$$\nu_2(f(2^e-k-1))=k+\nu_2(k)-e.$$
\begin{itemize}\item[b.] If $p$ is an odd prime which is not a Wieferich prime, $k\ge1$, and $e>\max\{1,1+\nu_p(j):0<j<k\}$, then
\begin{equation}\label{p^e-k-1}\nu_p(f(p^e-k-1))=\nu_p(k)+1-e.\end{equation}
\item[c.] If $p=1093$ or $3511$, $k\ge1$, and $e>\max\{1,2+\nu_p(j):0<j<k\}$, then
\begin{equation}\label{two}\nu_p(f(p^e-k-1))=\nu_p(k)+2-e.\end{equation}
\end{itemize}
\end{thm}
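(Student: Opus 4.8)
The plan is to reduce all three parts to one valuation computation by means of the classical closed form
$$f(n)=\frac{n+1}{2^{n+1}}\sum_{j=1}^{n+1}\frac{2^j}{j},$$
which one obtains by induction from the recursion $f(n+1)=\frac{n+2}{2(n+1)}f(n)+1$. Writing $G(m)=\sum_{j=1}^{m}2^j/j$ and putting $m=p^e-k$, the hypothesis on $e$ is equivalent to $k\le p^{e-1}$ in part (b) and to $k\le p^{e-2}$ in part (c); in particular $\nu_p(k)<e$, so $\nu_p(p^e-k)=\nu_p(k)$, while $\nu_p(2^{\,p^e-k})$ is $0$ for odd $p$ and $2^e-k$ for $p=2$. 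Hence the theorem is equivalent to the three statements $\nu_p(G(p^e-k))=1-e$ (part b), $\nu_p(G(p^e-k))=2-e$ (part c), and $\nu_2(G(2^e-k))=2^e-e$ (part a).

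For odd $p$ I would sort the terms of $G(p^e-k)$ by $v=\nu_p(j)$. Every term with $v\le e-2$ has valuation at least $2-e$, while the terms with $v=e-1$ are exactly $j=up^{e-1}$ for $1\le u\le p-1$ (here $k\le p^{e-1}$ is used), contributing
$$p^{\,1-e}\sum_{u=1}^{p-1}\frac{2^{\,p^{e-1}u}}{u}.$$
Since $p^{e-1}\equiv1\pmod{p-1}$ gives $2^{\,p^{e-1}u}\equiv2^u\pmod p$, the unit factor satisfies $\sum_{u=1}^{p-1}2^{\,p^{e-1}u}/u\equiv\sum_{u=1}^{p-1}2^u/u\equiv-2q_p(2)\pmod p$, where $q_p(2)=(2^{p-1}-1)/p$ is the Fermat quotient. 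When $p$ is not Wieferich this is a unit, so the $v=e-1$ layer has valuation exactly $1-e$ and dominates the rest, which proves part (b).

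Part (a) is the cleanest. I would write $G(2^e-k)=G(2^e)-\sum_{i=0}^{k-1}2^{\,2^e-i}/(2^e-i)$; the $i=0$ summand has valuation $2^e-e$, and the hypothesis $e>i+\nu_2(i)$ forces every summand with $i\ge1$ to have strictly larger valuation. It then suffices to show $\nu_2(G(2^e))>2^e-e$. Here the decisive observation is that $\sum_{j\ge1}2^j/j=-\log_2(1-2)=-\log_2(-1)=0$ in $\Q_2$ (the series converges because $\nu_2(2^j/j)=j-\nu_2(j)\to\infty$, and $\log_2(-1)=0$ as $-1$ is a root of unity). Consequently $G(2^e)=-\sum_{j>2^e}2^j/j$, and the elementary inequality $j-\nu_2(j)>2^e-e$ for all integers $j>2^e$ gives $\nu_2(G(2^e))>2^e-e$; therefore $\nu_2(G(2^e-k))=2^e-e$ exactly.

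The main obstacle is part (c). For a Wieferich prime the $v=e-1$ layer now cancels modulo $p$, so its contribution falls to level $2-e$ and must be balanced against the $v=e-2$ layer $p^{\,2-e}\sum_{p\nmid u,\ u<p^2}2^{\,p^{e-2}u}/u$. This requires a second-order analysis: one needs $\sum_{u=1}^{p-1}2^{\,p^{e-1}u}/u$ modulo $p^2$, hence $q_p(2)$ modulo $p^2$ (where $\nu_p(q_p(2))=1$, since no super-Wieferich prime is known), together with the $v=e-2$ sum modulo $p$, and one must show that the resulting coefficient at level $2-e$ is a unit. Establishing this non-vanishing—that the two lowest layers do not conspire to cancel—is the delicate point, and it is here that a direct verification for the two primes $p=1093$ and $3511$ enters.
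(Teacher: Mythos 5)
Your reduction via the closed form $f(n)=\frac{n+1}{2^{n+1}}\sum_{j=1}^{n+1}2^j/j$ is a genuinely different route from the paper's, which instead inducts on $k$ using Rockett's recursion in the form $f(n-1)=(f(n)-1)\frac{2n}{n+1}$, with the case $k=1$ (the valuation of $f(p^e-2)$, Lemma \ref{2lem}) established separately as the base case. Your translation of the hypotheses into $k\le p^{e-1}$ (resp.\ $k\le p^{e-2}$) is correct, and parts (a) and (b) are complete and correct as written: the decomposition of $G(p^e-k)=\sum 2^j/j$ into layers according to $\nu_p(j)$ isolates the terms $j=up^{e-1}$, $1\le u\le p-1$; the congruence $\sum_{u=1}^{p-1}2^u/u\equiv-2q_p(2)\pmod p$ is the classical companion of the Eisenstein congruence (\ref{Eis}) that the paper uses; and for $p=2$ the identity $\sum_{j\ge1}2^j/j=-\log(-1)=0$ in $\Q_2$ together with $j-\nu_2(j)>2^e-e$ for $j>2^e$ cleanly gives $\nu_2(G(2^e))>2^e-e$, after which the hypothesis $e>i+\nu_2(i)$ makes the $i=0$ term dominate. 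What your approach buys is a one-shot, non-inductive argument and a transparent explanation of where the Wieferich condition enters (it is exactly the non-vanishing of $q_p(2)$ mod $p$, which controls the top layer); what the paper's approach buys is that all the delicate analysis is concentrated in the single case $k=1$, after which the recursion transports the valuation upward for free.

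Part (c), however, is not proved: you correctly identify that for $p\in\{1093,3511\}$ the $\nu_p(j)=e-1$ layer drops to level $2-e$ and must be combined with the $\nu_p(j)=e-2$ layer, but you stop at naming the required non-vanishing rather than establishing it. To finish along your lines you need two further ingredients. First, for a Wieferich prime one has $2^{p^{e-1}}\equiv2\pmod{p^2}$ and $2^{up^{e-2}}\equiv2^u\pmod p$, so the level-$(2-e)$ coefficient is, mod $p$, the fixed quantity $\frac1p\sum_{u=1}^{p-1}\frac{2^u}{u}+\sum_{u<p^2,\,p\nmid u}\frac{2^u}{u}$, independent of $e$ and $k$ (here the hypothesis $k\le p^{e-2}$ guarantees the second sum runs over all $u<p^2$ prime to $p$); without this observation, ``a direct verification for the two primes'' is not yet a finite computation. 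Second, one must actually verify that this quantity is a unit for $p=1093$ and $p=3511$. The paper faces the same obstruction and resolves it by a Maple computation mod $p^2$ inside the proof of Lemma \ref{2lem} (the relevant sums come out to $487\cdot1093$ and $51\cdot3511$, hence of valuation exactly one); your write-up needs the analogous finite check, and until it is supplied part (c) remains a sketch.
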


\begin{cor} If $p\not\in LW$, then the function $f:\N\to\Q_p$ is nowhere continuous (in the $p$-adic topology).\end{cor}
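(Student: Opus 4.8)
The plan is to derive nowhere continuity from Theorem~\ref{2^e-k} via a local–unboundedness argument, which in an ultrametric space is incompatible with continuity. If $f$ were continuous at some $n_0\in\N$, there would be an integer $a$ such that $\nu_p(m-n_0)\ge a$ implies $\nu_p(f(m)-f(n_0))\ge 1$ for all $m\in\N$; the ultrametric inequality then gives $\nu_p(f(m))\ge\min\{1,\nu_p(f(n_0))\}$, so $\nu_p\circ f$ is bounded below on the neighborhood $U_a:=\{m\in\N:\nu_p(m-n_0)\ge a\}$. Hence it suffices to show that for every $n_0\in\N$ and every $a$ the set $U_a$ contains natural numbers $m$ with $\nu_p(f(m))$ arbitrarily negative.

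To produce such $m$ I would feed into Theorem~\ref{2^e-k} the numbers $m_e=p^e-k-1$, which for large $e$ are natural numbers converging $p$-adically to $-k-1$ and which, by the theorem, satisfy $\nu_p(f(m_e))\to-\infty$ as $e\to\infty$ (the exact value being $k+\nu_2(k)-e$ when $p=2$, $\nu_p(k)+1-e$ when $p$ is odd and non-Wieferich, and $\nu_p(k)+2-e$ when $p\in\{1093,3511\}$). The one thing to arrange is the congruence. Since $m_e\equiv-k-1\pmod{p^a}$ once $e\ge a$, I would choose $k\ge1$ with $k\equiv-n_0-1\pmod{p^a}$ (the residue class of $-n_0-1$ contains infinitely many positive integers, so such $k$ exist), which makes $m_e\equiv n_0\pmod{p^a}$, i.e.\ $m_e\in U_a$. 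Taking $e$ larger than both $a$ and the bound $\max\{\dots\}$ required in the relevant part of the theorem then yields infinitely many $m_e\in U_a$ with $\nu_p(f(m_e))$ linear in $e$ and tending to $-\infty$. Thus $\nu_p\circ f$ is unbounded below on $U_a$, so $f$ is discontinuous at $n_0$; as $n_0$ was arbitrary, $f$ is nowhere continuous.

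Finally I would verify that the three cases of Theorem~\ref{2^e-k} exhaust all $p\notin LW$: case (a) handles $p=2$, case (b) covers every odd non-Wieferich prime, and case (c) covers the two known Wieferich primes $1093$ and $3511$, so the only primes left out are the Wieferich primes exceeding $3511$, which are exactly $LW$. The argument is soft once Theorem~\ref{2^e-k} is available; the only point needing care is the residue bookkeeping—selecting $k$ from $n_0$ and $a$ so that the explicitly evaluated sequence $p^e-k-1$ genuinely lands in the prescribed neighborhood $U_a$—together with confirming that the case analysis omits precisely $LW$. I expect no real obstacle, since the blow-up $\nu_p(f(m_e))\to-\infty$ is handed to us directly by the theorem.
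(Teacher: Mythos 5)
Your proof is correct and follows essentially the same route as the paper: both arguments feed Theorem~\ref{2^e-k} a positive integer $k$ in the residue class of $-n-1$ modulo a high power of $p$ (the paper takes $k=2^e-n-1$ explicitly) so that $m=p^L-k-1$ lies in the prescribed neighborhood of $n$ while $\nu_p(f(m))\to-\infty$ as $L\to\infty$. Your repackaging via local unboundedness of $\nu_p\circ f$ and the ultrametric inequality is a harmless variant of the paper's direct estimate $\nu_2(f(m)-f(n))=-r$, and your residue bookkeeping and case check against $LW$ are sound.
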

\begin{proof} We give the proof when $p=2$. The proof when $p$ is odd is extremely similar, using (\ref{p^e-k-1}) and (\ref{two}).

Let $n\in\N$, and let $\eps>0$ be given. We will show there exists $m\in\N$ such that $d_2(m,n)<\eps$ and $d_2(f(m),f(n))>1$.
Choose $e$ so that $2^{-e}<\eps$ and $2^e>n+1$. By Theorem \ref{2^e-k}, if $L>\max\{j+\nu_2(j):0<j<2^e-n-1\}$, then
$$\nu_2(f(2^L-(2^e-n)))=2^e-n-1+\nu_2(n+1)-L.$$
Choose $L$ large enough that this is less than $\nu_2(f(n))$ and less than 0. Call this value $-r$ with $r\ge1$. Then $\nu_2(f(2^L-(2^e-n))-f(n))=-r$ and so
$$d_2(f(2^L-(2^e-n)),f(n))=2^r\ge2,$$
while $d_2(2^L-(2^e-n),n)=2^{-e}<\eps$. So $m=2^L-(2^e-n)$ has the desired properties.\end{proof}

Since $\N$ is dense in $\Z_p$, this of course implies that $f$ cannot be extended to a function $\fbar$ that is continuous at even one point of $\Z_p$.
We state this, but omit the elementary and standard proof.
\begin{cor} If $p\not\in LW$ and $x\in\Z_p$, it is impossible to define $\fbar(x)$ so that for all sequences $\la n_i\ra$ in $\N$ such that $n_i\to x$ we have
$f(n_i)\to\fbar(x)$.\end{cor}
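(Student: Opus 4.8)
The plan is to argue by contradiction, leaning entirely on the previous corollary (nowhere continuity of $f$) together with the ultrametric inequality; no new computation about binomial coefficients is needed. Suppose, for some $x\in\Z_p$, a value $\fbar(x)\in\Q_p$ existed with the stated property. Since $\N$ is dense in $\Z_p$, I would first choose any sequence $\la n_i\ra$ in $\N$ with $n_i\to x$; the hypothesis on $\fbar(x)$ then forces $f(n_i)\to\fbar(x)$.

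The heart of the matter is to manufacture a \emph{second} sequence that also converges to $x$ but whose $f$-values stay uniformly far from those of the first. Reading off the proof of the nowhere-continuity corollary, one sees that it produces, for every $n\in\N$ and every $\eps>0$, an element $m\in\N$ with $d_p(m,n)<\eps$ and $d_p(f(m),f(n))>1$. The essential feature is that this discontinuity gap is bounded below by the fixed constant $1$, independently of $n$ and $\eps$. Applying this at each $n_i$ with $\eps=p^{-i}$, I would select $m_i\in\N$ with $d_p(m_i,n_i)<p^{-i}$ and $d_p(f(m_i),f(n_i))>1$.

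Next I would verify that $\la m_i\ra$ also converges to $x$. By the ultrametric inequality,
$$d_p(m_i,x)\le\max\{d_p(m_i,n_i),\,d_p(n_i,x)\},$$
and both quantities on the right tend to $0$, so $d_p(m_i,x)\to0$. Interleaving the two sequences into $n_1,m_1,n_2,m_2,\dots$ yields a single sequence in $\N$ that still converges to $x$. By the assumed defining property of $\fbar(x)$, the $f$-values along this interleaved sequence must converge to $\fbar(x)$, hence be Cauchy in $\Q_p$. But consecutive terms satisfy $d_p(f(m_i),f(n_i))>1$ for every $i$, so the sequence is not Cauchy; this is the desired contradiction. (Equivalently, $f(n_i)\to\fbar(x)$ and $f(m_i)\to\fbar(x)$ would force $d_p(f(m_i),f(n_i))\to0$.)

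There is no genuine obstacle once the nowhere-continuity corollary is available; the single point that deserves care is extracting from that corollary a gap bounded \emph{uniformly} away from $0$ — here the constant $1$ — since a gap allowed to shrink to $0$ would be perfectly compatible with a limit existing. The ultrametric inequality is precisely what makes the interleaving clean, turning $d_p(m_i,n_i)\to0$ together with $n_i\to x$ into $m_i\to x$ with no further estimates.
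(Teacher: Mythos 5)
Your argument is correct, and it is precisely the ``elementary and standard'' proof that the paper explicitly omits: combine density of $\N$ in $\Z_p$ with the uniform gap $d_p(f(m),f(n))>1$ extracted from the proof of the nowhere-continuity corollary, then interleave. Your remark that one needs the gap bounded uniformly away from $0$ (rather than the bare statement of nowhere continuity) is exactly the right point of care.
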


We remark that the summand functions $f_k(x):=\binom xk^{-1}=\frac{k!}{x(x-1)\cdots(x-(k-1))}$ are continuous on $\Z_p-\{0,\ldots,k-1\}$.

Although $\fbar(x)$ cannot be defined so that it works nicely for all sequences of positive integers approaching $x$,
it might happen that it can be defined so that it is the limit of the most natural sequence of positive integers approaching $x$,
namely the finite partial sums.
\begin{defin}\label{def} For a $p$-adic integer $x=\dstyle\sum_{i=0}^\infty\eps_ip^i$ with $\eps_i\in\{0,\ldots,p-1\}$, let $x_n=\ds\sum_{i=0}^n\eps_ip^i$. Say $\fbar(x)$ is {\bf $p$-definable} if $\lim f(x_n)$
exists in $\Q_p$. If so, define $\fbar(x)=\lim f(x_n)$.\end{defin}

The following result points to a similarity and a difference between the prime 2 and the odd primes.
\begin{thm} \label{-1} For all primes $p$, $\fbar(-1)$ is $p$-definable. If $p=2$, then $\fbar(-1)=0$, while if $p$ is odd, then $\fbar(-1)\equiv1\mod p$.\end{thm}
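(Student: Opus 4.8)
The plan is to identify $\fbar(-1)$ as $\lim_{e\to\infty}f(p^e-1)$ and to evaluate this limit by means of a closed form for the inner sum. Since every $p$-adic digit of $-1$ equals $p-1$, the $n$th partial sum is $x_n=\sum_{i=0}^n(p-1)p^i=p^{n+1}-1$, so $p$-definability of $\fbar(-1)$ is exactly the existence of $\lim_e f(p^e-1)$ in $\Q_p$. Throughout I would use Sury's identity
$$f(n)=\frac{n+1}{2^{n+1}}\sum_{k=1}^{n+1}\frac{2^k}{k}$$
(see \cite{Sury}), which at $n=p^e-1$ reads $f(p^e-1)=\ds\frac{p^e}{2^{p^e}}\sum_{k=1}^{p^e}\frac{2^k}{k}$. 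The valuation of the $k$th inner summand is $k-\nu_2(k)$ when $p=2$ but only $-\nu_p(k)$ when $p$ is odd, and this contrast is what produces the two different answers.

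For $p=2$ I would first note that the full series $\sum_{k\ge1}2^k/k$ converges $2$-adically, its terms having valuation $k-\nu_2(k)\to\infty$, and that it sums to $0$: it equals $-\log_2(1-2)=-\log_2(-1)$, and $\log_2(-1)=0$ since $2\log_2(-1)=\log_2(1)=0$ in the characteristic-zero field $\Q_2$. Hence $\sum_{k=1}^{2^e}2^k/k=-\sum_{k>2^e}2^k/k$, and the tail has valuation at least $\min_{k>2^e}(k-\nu_2(k))=2^e+1$ for $e\ge2$. Multiplying by $2^{e-2^e}$ gives $\nu_2(f(2^e-1))\ge e+1\to\infty$, so $f(2^e-1)\to0$ and $\fbar(-1)=0$.

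For odd $p$ the series $\sum 2^k/k$ diverges, so I would argue termwise. The only index in $1\le k\le p^e$ with $\nu_p(k)=e$ is $k=p^e$, and its summand contributes $\frac{p^e}{2^{p^e}}\cdot\frac{2^{p^e}}{p^e}=1$; every other index has $\nu_p(k)\le e-1$, so contributes a term of valuation $e-\nu_p(k)\ge1$. Thus $f(p^e-1)=1+R_e$ with $\nu_p(R_e)\ge1$, which already forces $\fbar(-1)\equiv1\pmod p$ once the limit is known to exist, since a limit of elements $\equiv1$ is again $\equiv1$. To get convergence I would group the terms of $R_e$ by $i=e-\nu_p(k)$: writing $k=p^{e-i}k'$ with $p\nmid k'$ and using $2^{p^{e-i}k'-p^e}=(2^{p^{e-i}})^{k'}(2^{p^e})^{-1}$, one finds $R_e=\sum_{i=1}^e p^iB_{i,e}$, where each $B_{i,e}$ is a finite unit sum of bounded length whose only dependence on $e$ enters through the powers $2^{p^{e-i}}$ and $2^{p^e}$.

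The crux, and the step I expect to be the main obstacle, is controlling this $e$-dependence. The decisive input is the standard fact that $2^{p^m}$ is $p$-adically Cauchy, converging to the Teichm\"uller representative $\omega$ of $2$ with $2^{p^m}\equiv\omega\pmod{p^{m+1}}$. This makes each $B_{i,e}$ converge, as $e\to\infty$ with $i$ fixed, to an explicit limit $B_i$ assembled from powers of $\omega$, and it supplies a valuation bound for the rate of convergence. Splitting $R_e$ at a cutoff into the part with $i\le N$ and the tail with $i>N$---the latter of valuation exceeding $N$---then yields $f(p^e-1)\equiv1+\sum_{i=1}^N p^iB_i\pmod{p^{N+1}}$ for all sufficiently large $e$, so the sequence $f(p^e-1)$ is Cauchy and $\fbar(-1)=1+\sum_{i\ge1}p^iB_i$ exists. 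Combining the two cases finishes the proof; the only genuinely delicate bookkeeping is the uniform-in-$i$ rate estimate for $B_{i,e}\to B_i$, which is precisely where the Teichm\"uller congruence is used.
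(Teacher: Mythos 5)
Your proof is correct, but it takes a genuinely different route from the paper's. The paper handles $p=2$ via Proposition \ref{2^e-1} ($\nu_2(f(2^e-1))\ge 2e$, proved by a delicate pairing of reciprocal binomial coefficients and the estimate of Proposition \ref{sumrec}), and handles odd $p$ via Proposition \ref{p^e-1}, a precise congruence $f(p^e-1)-f(p^{e-1}-1)\equiv(1-2^{p-1})p^{e-1}f(p^{e-1}-1)\bmod p^{e+1}$ whose proof needs Lemmas \ref{newlem} and \ref{1-2} and Eisenstein's congruence. You instead start from Sury's closed form $f(n)=\frac{n+1}{2^{n+1}}\sum_{k=1}^{n+1}\frac{2^k}{k}$ and get $p=2$ from the $2$-adic identity $\sum_{k\ge1}2^k/k=-\log(-1)=0$ plus a tail estimate (yielding $\nu_2(f(2^e-1))\ge e+1$, weaker than the paper's $2e$ but enough here), and odd $p$ from the convergence $2^{p^m}\to\omega$ to the Teichm\"uller representative. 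Both halves check out: the $k=p^e$ term contributes exactly $1$, all other terms have valuation $e-\nu_p(k)\ge1$, and your cutoff argument closes correctly --- indeed the ``uniform-in-$i$'' worry you flag at the end is not a real obstacle, since modulo $p^{N+1}$ only the finitely many blocks with $i\le N$ matter and each $B_{i,e}$ converges for fixed $i$. What your route buys is a short, essentially self-contained proof of this one theorem that sidesteps all of Section 2; what the paper's route buys is sharper, reusable quantitative information: Proposition \ref{p^e-1} is needed again for Corollary \ref{0} and Proposition \ref{nu=2}, and the full strength $\nu_2(f(2^e-1))\ge2e$ of Proposition \ref{2^e-1} is used in the proof of Theorem \ref{yes}, so the paper cannot replace its lemmas with your weaker bounds even though your argument suffices for Theorem \ref{-1} itself.
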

\begin{proof} We will prove in Proposition \ref{2^e-1} that $\nu_2(f(2^e-1)\ge2e$, from which the result for $p=2$ is immediate since $2^e-1=(-1)_{e-1}$ in the notation of Definition \ref{def}. Now let $p$ be an odd prime.
We will prove in Proposition \ref{p^e-1} that, for $e\ge1$, $\nu_p(f(p^e-1)-f(p^{e-1}-1))\ge e$. Thus $\la f(p^e-1)\ra$ is a Cauchy sequence, and since $p^e-1=(-1)_{e-1}$, we deduce that $\fbar(-1)$ is $p$-definable. Since $f(p^0-1)=1$, we obtain $f(p^e-1)\equiv1\mod p$ for all $e\ge0$.\end{proof}

We show now that, for all primes $p\not\in LW$,  $\fbar(x)$ is not $p$-definable if $x$ is an integer less than $-1$.
\begin{thm}\label{no} Let $k$ be a positive integer and $p\not\in LW$. Then $\fbar(-k-1)$ is not $p$-definable.\end{thm}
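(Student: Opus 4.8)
The plan is to reduce $p$-definability of $\fbar(-k-1)$ to the growth of $\nu_p(f(p^e-k-1))$ as $e\to\infty$, which is exactly what Theorem \ref{2^e-k} records, and then to observe that this valuation tends to $-\infty$. First I would identify the partial sums of $-k-1$ explicitly. For any $p$-adic integer $x=\sum_{i\ge0}\eps_ip^i$, the partial sum $x_n=\sum_{i=0}^n\eps_ip^i$ is the unique representative of $x$ modulo $p^{n+1}$ lying in $\{0,1,\dots,p^{n+1}-1\}$, because $x-x_n$ is divisible by $p^{n+1}$ and $0\le x_n\le p^{n+1}-1$. Applying this to $x=-k-1$ gives $x_n\equiv p^{n+1}-k-1\pmod{p^{n+1}}$, and since $0\le p^{n+1}-k-1\le p^{n+1}-1$ as soon as $p^{n+1}\ge k+1$, we get $x_n=p^{n+1}-k-1$ for all sufficiently large $n$. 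Writing $e=n+1$, the sequence whose convergence is in question is therefore $\la f(p^e-k-1)\ra$ for large $e$.

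By Definition \ref{def}, $\fbar(-k-1)$ is $p$-definable if and only if $\lim_{e\to\infty}f(p^e-k-1)$ exists in $\Q_p$. Any sequence converging in $\Q_p$ is bounded, so it suffices to prove that $\nu_p(f(p^e-k-1))$ is \emph{not} bounded below; I would show in fact that it tends to $-\infty$.

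Here I would invoke Theorem \ref{2^e-k}. In each part its hypothesis requires only that $e$ exceed a fixed finite bound depending on $k$ (such as $\max\{1,j+\nu_2(j):0<j<k\}$ when $p=2$), so the formula applies for all large $e$. The hypothesis $p\notin LW$ is precisely what guarantees that one of the three parts applies: part (a) handles $p=2$, part (b) handles odd non-Wieferich $p$, and part (c) handles the two small Wieferich primes $p=1093,3511$, the excluded set $LW$ being exactly the Wieferich primes left uncovered. In every such case the valuation has the form
$$\nu_p(f(p^e-k-1))=c_{p,k}-e$$
for a constant $c_{p,k}$ (equal to $k+\nu_2(k)$, $\nu_p(k)+1$, or $\nu_p(k)+2$ respectively). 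Thus $\nu_p(f(p^e-k-1))\to-\infty$, the terms $f(p^e-k-1)$ are unbounded in $\Q_p$, and the sequence cannot converge. Hence $\fbar(-k-1)$ is not $p$-definable.

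The argument carries essentially no obstacle: Theorem \ref{2^e-k} does all the analytic work, and the negative coefficient of $e$ in the valuation formula forces unboundedness the instant the partial sums are identified. The only points needing care are the routine verification that the partial sums of $-k-1$ are exactly $p^e-k-1$, the remark that the theorem's lower bound on $e$ is a fixed constant and so eventually holds, and the matching of the three cases of Theorem \ref{2^e-k} against the condition $p\notin LW$.
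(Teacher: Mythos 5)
Your proposal is correct and follows essentially the same route as the paper: identify the partial sums of $-k-1$ as $p^e-k-1$, apply Theorem \ref{2^e-k} (whose three cases match the hypothesis $p\notin LW$), and conclude from the valuation $c_{p,k}-e\to-\infty$ that the sequence $\la f(x_n)\ra$ cannot converge. The paper phrases the conclusion as the sequence failing to be Cauchy while you phrase it as unboundedness, but this is an immaterial difference.
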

\begin{proof} We give the argument when $p$ is  odd. The argument when $p=2$ is extremely similar. Let $x=-k-1$. Let $e>\max\{1,\delta+\nu_p(j):0<j<k\}$, where $\delta=1$ if $p$ is not Wieferich, and $\delta=2$ if $p\in\{1093,3511\}$. Then $x_{e-1}=p^e-k-1$ and, by Theorem \ref{2^e-k}, $\nu_p(f(x_{e-1}))=\delta+\nu_p(k)-e$. Hence, as $e\to\infty$,
$$d_p(f(x_{e-1}),0)=p^{e-\delta-\nu(k)}\to\infty.$$
Thus $\la f(x_n)\ra$ is not a Cauchy sequence.\end{proof}

The following result applies only when $p$ is odd; the analogous statement is not true when $p=2$. It says that $-1$ is the only element of $\Z_p-\N$ for which $\fbar(x)$ is $p$-definable when $p$ is a good odd prime or 23. As noted earlier, this includes all odd primes less than 100,000,000.
\begin{thm}\label{onlyodd} If $p$ is a good odd prime or $p=23$ and $x\in\Z_p-\{-1,0,1,2,\ldots\}$, then $\fbar(x)$ is not $p$-definable.\end{thm}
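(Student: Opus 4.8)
The plan is to show that the sequence $\la f(x_n)\ra$ is unbounded in $\Q_p$, i.e.\ that $\liminf_n\nu_p(f(x_n))=-\infty$; since any convergent sequence in $\Q_p$ has $p$-adic valuation bounded below, this forces $\fbar(x)$ to be undefined. First I would dispose of the integer case: if $x\in\Z$ then, as $x\ne-1$ and $x\notin\N$, we have $x\le-2$, and Theorem \ref{no} applies. So I may assume $x\in\Z_p-\Z$, which means the base-$p$ expansion $\sum\eps_ip^i$ is eventually neither $0$ nor $p-1$; in particular there are infinitely many indices $n$ with $\eps_n\ge1$ (call these \emph{active}), and the least index $i_0$ with $\eps_{i_0}\ne p-1$ is finite.

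The engine of the proof is the closed form $f(n)=\frac{n+1}{2^{n+1}}T(n+1)$ with $T(m)=\sum_{j=1}^m 2^j/j$, which underlies the computations of Theorem \ref{2^e-k} and, for odd $p$, gives $\nu_p(f(x_n))=\nu_p(x_n+1)+\nu_p(T(x_n+1))$. A short carry computation shows $\nu_p(x_n+1)=i_0$ for all $n>i_0$, so everything reduces to understanding $\nu_p(T(x_n+1))$. Writing $m=x_n+1$ and grouping the terms of $T(m)$ according to $a=\nu_p(j)$, one finds $T(m)=\sum_{a=0}^{n}p^{-a}C_a$, where each $C_a$ is a $p$-adic integer and, by Fermat's little theorem ($p^a\equiv1\bmod(p-1)$), the top group satisfies $C_n\equiv T(\eps_n)\equiv\frac{2^{\eps_n}}{\eps_n}f(\eps_n-1)\pmod p$ (here $\eps_n$ is the leading digit of $m$ when $n>i_0$). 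Since the remaining groups contribute valuation $\ge-(n-1)$, it follows that if the active digit $\eps_n$ is \emph{good}, meaning $p\nmid f(\eps_n-1)$, then $\nu_p(T(m))=-n$ and $\nu_p(f(x_n))=i_0-n$. Thus, as long as infinitely many active digits are good, $\liminf_n\nu_p(f(x_n))=-\infty$ and we are done. This case includes $\eps_n=p-1$ for non-Wieferich $p$, consistent with Theorem \ref{2^e-k}b.

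It remains to treat $x$ whose digits are eventually all $0$ or \emph{bad} (i.e.\ $p\mid f(\eps_n-1)$ for every large active $n$). Here the goodness hypothesis is essential: it forces $\nu_p(f(\eps_n-1))=1$ exactly (with the single exception $p=23$, $\eps_n=13$, where it is $2$), so the top group $C_n$ vanishes only to first order and I would pass to the next group. Computing $\nu_p\bigl(p^{-n}C_n+p^{-(n-1)}C_{n-1}\bigr)$ requires $C_n$ modulo $p^2$, which brings in $2^{p^a}\bmod p^2$ and hence the Fermat quotient of $2$; this is precisely where the hypothesis $p\notin LW$ enters, guaranteeing the relevant correction is a unit so that the combined group has valuation $-(n-1)+O(1)$, whence $\nu_p(f(x_n))=i_0-(n-1)+O(1)\to-\infty$. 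This second-order estimate, together with the bookkeeping showing that the sub-leading term does not itself cancel for all large $n$, is the main obstacle; the good-prime condition and the non-Wieferich hypothesis are exactly the inputs that defeat it.

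Finally, the two Wieferich good primes $1093$ and $3511$ are treated by the same scheme: the role of the non-Wieferich hypothesis in the second-order estimate is there played by the $\bmod p^2$ degeneracy recorded in Theorem \ref{2^e-k}c, and being a finite set they could alternatively be checked directly. In every case one obtains $\liminf_n\nu_p(f(x_n))=-\infty$, whence $\fbar(x)$ is not $p$-definable.
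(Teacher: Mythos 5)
Your overall strategy is genuinely different from the paper's: you aim to show the \emph{values} $f(x_n)$ are unbounded, using the closed form $f(n)=\frac{n+1}{2^{n+1}}\sum_{j=1}^{n+1}2^j/j$ and a grouping of the sum by $\nu_p(j)$, whereas the paper never controls $\nu_p(f(x_n))$ itself. Instead it proves (by induction on $i$ via the recursion $f(n)=\frac{n+1}{2n}f(n-1)+1$) the exact formula $\nu_p(f(cp^e+i)-f(i))=-e+\nu_p(i+1)+\nu_p(f(c-1))$ when $\nu_p(f(c-1))\le1$ (Proposition \ref{fce}), plus a separate argument (Proposition \ref{nu=2}) for the one exceptional pair $p=23$, $c=13$; since $\nu_p(i+1)\le e-1$, every consecutive difference $f(x_e)-f(x_{e-1})$ at an active digit has $\nu_p\le0$, so the sequence is not Cauchy. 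Your first case --- infinitely many active digits $\eps_n$ with $p\nmid f(\eps_n-1)$ --- is plausible and clean: there the top group $C_n$ is a unit, the lower groups contribute valuation $\ge-(n-1)$, and $\nu_p(f(x_n))=i_0-n\to-\infty$.

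The genuine gap is the complementary case, where every large active digit is ``bad.'' There you must show $\nu_p\bigl(C_n+pC_{n-1}+p^2C_{n-2}+\cdots\bigr)$ stays bounded, and your sketch only discusses the pair $C_n,C_{n-1}$. But $C_{n-1}\bmod p$ is (essentially) $\sum_{k\le \eps_np+\eps_{n-1},\,p\nmid k}2^k/k$, which depends on the \emph{next} digit $\eps_{n-1}$ --- a quantity the adversarial choice of $x$ controls. Nothing you say rules out that $C_n/p+C_{n-1}\equiv0\pmod p$ for every large $n$, in which case the cancellation cascades to $C_{n-2}$ and beyond and your $\liminf$ claim is unproven; indeed Proposition \ref{2^e-1} of the paper ($\nu_2(f(2^e-1))\ge2e$) shows that for $p=2$ exactly this kind of deep cascading cancellation really occurs, so it cannot be dismissed as mere ``bookkeeping.'' You explicitly flag this step as ``the main obstacle'' without resolving it, which means the proof is incomplete precisely where the theorem is hard. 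Two smaller problems: the theorem carries no non-Wieferich hypothesis, so invoking $p\notin LW$ (and Theorem \ref{2^e-k}c) is importing an assumption you are not entitled to --- note that $1093$ and $3511$ are good primes and must be covered; and your disposal of integer $x\le-2$ via Theorem \ref{no} likewise smuggles in $p\notin LW$, whereas the paper's estimate (\ref{pdiff}) covers negative integers with no such restriction. The paper's difference-based induction sidesteps all of this, which is exactly why it is the route taken.
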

\begin{proof} We will prove in Section \ref{goodsec}  that if $p$ is a good odd prime or 23 and $1\le c\le p-1$ and $0\le i<p^e-1$, then
\begin{equation}\label{pdiff}\nu_p(f(c\cdot p^e+i)-f(i))\le 0.\end{equation}
 If $x$ is as in the theorem, then there exist infinitely many $e$ for which (\ref{pdiff}) applies directly to say $\nu_p(f(x_e)-f(x_{e-1}))\le0$.
Thus $d_p(f(x_e),f(x_{e-1}))\ge 1$ and so $\la f(x_n)\ra$ is not a Cauchy sequence.\end{proof}

Our other main theorem says that $\fbar(x)$ is 2-definable for those $x$ whose infinitely many 1's are eventually very sparse.
This is a major difference between the prime 2 and the odd primes.
 This theorem will be proved in Section \ref{2sec}.
\begin{thm} Suppose $x=\ds\sum_{i=0}^\infty2^{e_i}$, $e_0<e_1<\cdots$, and there exists a positive integer $N$ such that $e_k>k+\ds\sum_{i=0}^{k-1}2^{e_i}$ for all $k\ge N$.
Then $\fbar(x)$ is 2-definable.\label{yes}\end{thm}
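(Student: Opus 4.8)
The plan is to reduce 2-definability to a single Cauchy estimate along the ``jumps'' of $x$, and to obtain that estimate from the closed form $f(n)=\frac{n+1}{2^{n+1}}\sum_{k=1}^{n+1}2^k/k$ (see \cite{Sury}) together with the 2-adic identity $\sum_{k=1}^\infty 2^k/k=0$, which holds because the terms have valuation $k-\nu_2(k)\to\infty$ and the sum is a value of the 2-adic logarithm.

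First I would note that, in the notation of Definition \ref{def}, the partial sum $x_n$ is constant on each block $e_k\le n<e_{k+1}$, where it equals $y_k:=\sum_{i=0}^k2^{e_i}=x_{e_k}$. Consequently $\lim_n f(x_n)$ exists in $\Q_2$ if and only if $\la f(y_k)\ra$ is Cauchy, and for this it suffices to prove $\nu_2(f(y_k)-f(y_{k-1}))\to\infty$. Writing $a=y_{k-1}$ and $e=e_k$, we have $y_k=a+2^e$ with $a<2^e$, while the hypothesis (valid for $k\ge N$) becomes $e>k+a$.

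The core step is an exact formula for $f(a+2^e)-f(a)$ when $a<2^e$. Putting $T(m)=\sum_{k>m}2^k/k$, the displayed identities give $f(a)=-\frac{a+1}{2^{a+1}}T(a+1)$ and similarly for $a+2^e$, since $\sum_{k=1}^m2^k/k=-T(m)$. Reindexing $T(a+2^e+1)$ via $k=j+2^e$ and combining the two tails term by term, I expect
\[
f(a+2^e)-f(a)=\frac{2^e}{2^{a+1}}\sum_{j\ge a+2}\frac{2^j(a+1-j)}{j(j+2^e)}=:\frac{2^e}{2^{a+1}}\,\Sigma,
\]
the extra factor $2^e$ arising from the cancellation $\frac{a+1}{j}-\frac{a+2^e+1}{j+2^e}=\frac{2^e(a+1-j)}{j(j+2^e)}$.

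It then remains to bound $\nu_2(\Sigma)$ below. For $j<2^e$ one has $\nu_2(j+2^e)=\nu_2(j)$, so the $j$th summand has valuation $j+\nu_2(j-a-1)-2\nu_2(j)\ge j-2\log_2 j\ge(a+2)-2\log_2(a+2)$, using monotonicity of $t-2\log_2 t$ and $j\ge a+2$. For $j\ge2^e$ the summand has valuation $\ge j-1-2\log_2 j\ge2^e-1-2e$, which exceeds the previous bound once $2^e$ is large compared with $a$; this is exactly where eventual sparseness enters, as $e>k+a$ forces $2^e\gg a$. Hence $\nu_2(\Sigma)\ge(a+2)-2\log_2(a+2)$ for all large $k$, so
\[
\nu_2(f(y_k)-f(y_{k-1}))=e-(a+1)+\nu_2(\Sigma)\ge e+1-2\log_2(a+2)>k+a+1-2\log_2(a+2).
\]
Since $a=y_{k-1}\to\infty$ while $\log_2(a+2)=o(a)$, the right-hand side tends to $\infty$, so $\la f(y_k)\ra$ is Cauchy and $\fbar(x)$ is 2-definable. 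The step I expect to be the main obstacle is precisely this lower bound on $\nu_2(\Sigma)$: the infinite tail contains summands indexed by multiples of $2^e$ — most dangerously $j=2^e$ itself — and one must check that these cannot drag the valuation down, which succeeds only because $e$ is astronomically larger than $a$, i.e. because the $1$'s of $x$ are eventually very sparse.
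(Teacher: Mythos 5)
Your proof is correct, but it takes a genuinely different route from the paper's. The paper never touches Sury's closed form: it first proves $\nu_2(f(2^e-1))\ge 2e$ (Proposition \ref{2^e-1}), which in turn rests on an analysis of $\nu_2\bigl(\sum_{j=1}^n\frac1{2j-1}\bigr)$ via elementary symmetric polynomials of consecutive odd integers, and then runs an induction on $i$ through the recursion $f(n)=\frac{n+1}{2n}f(n-1)+1$ to obtain $\nu_2(f(2^e+i)-f(i))\ge e-i-1$ for $i<2^e$; the hypothesis $e_k>k+\sum_{i<k}2^{e_i}$ is then exactly what converts this into a Cauchy estimate. You instead start from $f(n)=\frac{n+1}{2^{n+1}}\sum_{k=1}^{n+1}2^k/k$ and the $2$-adic vanishing $\sum_{k\ge1}2^k/k=-\log(-1)=0$, which turns $f(a+2^e)-f(a)$ into an explicit tail series whose valuation you bound termwise. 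Both arguments are sound (your reindexing identity and the estimate $\nu_2(j)+\nu_2(j+2^e)\le 1+2\log_2 j$ for $j\ge 2^e$ check out, and the only caveat --- that the $j\ge2^e$ tail not dominate --- is indeed absorbed by the sparseness hypothesis for large $k$). What your approach buys is a substantially stronger estimate, $\nu_2(f(a+2^e)-f(a))\ge e+1-2\log_2(a+2)$ versus the paper's $e-a-1$, so your method would in fact prove $2$-definability under a far weaker sparseness condition, roughly $e_k>k+2\log_2\bigl(\sum_{i<k}2^{e_i}\bigr)+O(1)$, i.e.\ $e_k$ need only beat about $2e_{k-1}+k$ rather than the doubly-exponential growth the theorem as stated demands. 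What the paper's approach buys is self-containment within its recursion-based machinery (Proposition \ref{2^e-1} and the symmetric-function lemmas are reused elsewhere), and a bound $\nu_2(f(2^e+i)-f(i))\ge e-i-1$ valid for every $i<2^e$ rather than only in the regime where $2^e$ dwarfs $a$.
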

\ni For example,
$$\fbar(1+2^3+2^{12}+2^{2^{12}+13}+2^{2^{4109}+4110}+\cdots)$$
is 2-definable.

Since negative integers are those $x$ for which, in Definition \ref{def}, all but a finite number of $\eps_i$ equal $p-1$,
Theorems \ref{no} and \ref{yes} tell whether $\fbar(x)$ is 2-definable for $x$ at two extremes. It would be interesting to know more completely which 2-adic integers $x$ have the property that $\fbar(x)$ is 2-definable.

\section{Some $p$-exponents of $f(n)$}\label{pfsec}
In this section, we prove Theorem \ref{2^e-k} and a result, Proposition \ref{p^e-1}, which was used above and will be used again later.
We begin the proof of Theorem \ref{2^e-k} with a special case.
\begin{lem}\label{2lem} Let $p$ be a prime not in $LW$, and $e\ge2$. Then
 $$\nu_p(f(p^e-2))=\begin{cases}-(e-2)&p\in\{1093,3511\}\\
 -(e-1)&\text{otherwise.}\end{cases}$$\end{lem}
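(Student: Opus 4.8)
The plan is to reduce the whole statement to the $p$-adic valuation of $f(p^e-1)-1$, which the earlier results already control. The starting point is the classical closed form for the reciprocal binomial sum (see \cite{Sury}, \cite{YZ}): for every $n\ge 0$,
$$f(n)=\frac{n+1}{2^{n+1}}\sum_{j=1}^{n+1}\frac{2^j}{j}.$$
Evaluating this at $n=p^e-2$ and at $n=p^e-1$, and splitting off the top term $j=p^e$ in the second case, gives the two identities
$$f(p^e-2)=\frac{p^e-1}{2^{p^e-1}}\,S_e,\qquad f(p^e-1)-1=\frac{p^e}{2^{p^e}}\,S_e,\qquad S_e:=\sum_{j=1}^{p^e-1}\frac{2^j}{j},$$
and dividing them yields the uniform relation $f(p^e-2)=\tfrac{2(p^e-1)}{p^e}\bigl(f(p^e-1)-1\bigr)$ (equivalently, the standard recurrence $f(n)=\tfrac{n+1}{2n}f(n-1)+1$ at $n=p^e-1$). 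Since $\nu_p(p^e-1)=0$, the prefactor has $\nu_p$ equal to $-(e-1)$ when $p=2$ and $-e$ when $p$ is odd. Thus everything comes down to computing $c_e:=\nu_p(f(p^e-1)-1)$. For $p=2$ this is immediate: Proposition \ref{2^e-1} gives $\nu_2(f(2^e-1))\ge 2e$, so $f(2^e-1)-1$ is a unit, $c_e=0$, and the prefactor delivers $\nu_2(f(2^e-2))=-(e-1)$, as claimed.

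For odd $p$ I would first show $c_e$ stabilizes. Proposition \ref{p^e-1} gives $\nu_p\bigl(f(p^e-1)-f(p^{e-1}-1)\bigr)\ge e$, so for all $e\ge 2$ one has $f(p^e-1)\equiv f(p-1)\pmod{p^2}$ and, summing the telescoping differences, $f(p^e-1)\equiv f(p^2-1)\pmod{p^3}$. Hence $c_e=\nu_p(f(p-1)-1)$ whenever the latter is $<2$. By the second identity at $e=1$, $f(p-1)-1=\tfrac{p}{2^p}\sum_{j=1}^{p-1}2^j/j$, so $c_e=1+\nu_p\bigl(\sum_{j=1}^{p-1}2^j/j\bigr)$, and I would use the classical congruence
$$\sum_{j=1}^{p-1}\frac{2^j}{j}\equiv -2q_p(2)\pmod p,\qquad q_p(2)=\frac{2^{p-1}-1}{p},$$
obtained by substituting $\tfrac1j\equiv(-1)^{j-1}\tfrac1p\binom{p}{j}\pmod p$ and evaluating $\sum_j\binom{p}{j}(-2)^j=(1-2)^p$. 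When $p$ is not Wieferich, $q_p(2)\not\equiv 0$, so $c_e=1$ and $\nu_p(f(p^e-2))=-e+1=-(e-1)$.

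The genuinely delicate point, and the main obstacle, is the Wieferich case $p\in\{1093,3511\}$, where $q_p(2)\equiv 0\pmod p$ and the congruence above only yields $c_e\ge 2$. Here I would instead use the stabilization $c_e=\nu_p(f(p^2-1)-1)=2+\nu_p(S_2)$ (valid once this is $<3$), so that the target $c_e=2$ is exactly the assertion that $S_2=\sum_{j=1}^{p^2-1}2^j/j$ is a $p$-adic unit. Proving this requires second-order information: either one refines the Fermat-quotient congruence to modulus $p^2$, where a correction term $p\sum_{j=1}^{p-1}2^jH_{j-1}/j$ (with $H_{j-1}=\sum_{k=1}^{j-1}1/k$) appears and $c_e=2$ reduces to a single nonvanishing congruence mod $p$, or one simply verifies $S_2\not\equiv 0\pmod p$ directly. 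Because $1093$ and $3511$ are the only Wieferich primes below $3511$ and $p\notin LW$ excludes all larger ones, this is a finite unconditional check for exactly two primes, which is the route I would take.
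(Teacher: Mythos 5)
Your argument is correct, but it reaches the lemma by a genuinely different route than the paper. The paper works directly with the sum $\sum_i\binom{p^e-2}{i}^{-1}$: it computes $\nu_p\binom{p^e-2}{i}=\nu_p(i+1)$, isolates the dominant terms at $i=cp^{e-1}-1$, evaluates them mod $p$ to get $\tfrac1{p^{e-1}}\sum_{c=1}^{p-1}(-1)^{c+1}\tfrac1c$, and invokes Eisenstein's congruence to land on the Fermat quotient; the Wieferich case is settled by a sharper mod-$p^2$ version of the same term-by-term analysis plus a {\tt Maple} evaluation. You instead use the closed form $f(n)=\tfrac{n+1}{2^{n+1}}\sum_{j\le n+1}2^j/j$ (equivalently the recurrence at $n=p^e-1$) to reduce everything to $\nu_p(f(p^e-1)-1)$, stabilize that quantity via Proposition \ref{p^e-1} (and Proposition \ref{2^e-1} when $p=2$), and then hit the Fermat quotient through the congruence $\sum_{j=1}^{p-1}2^j/j\equiv-2q_p(2)$, which is just Eisenstein's identity in a different costume. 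Your route is shorter and avoids the computation of $\binom{p^e-2}{cp^{e-1}-1}/p^{e-1}$ mod $p$, but it imports Propositions \ref{p^e-1} and \ref{2^e-1} from later sections (this is logically sound --- neither depends on the lemma --- though it would force a reordering of the paper, since the lemma is the base case of the induction proving Theorem \ref{2^e-k}); the paper's proof buys self-containment at the cost of more hands-on binomial arithmetic. Two loose ends you should tie up: Proposition \ref{2^e-1} only covers $e\ge3$, so the case $p=2$, $e=2$ needs the one-line check $f(3)-1=5/3$; and the Wieferich case is reduced to, but not accompanied by, the finite verification that $S_2$ is a unit for $p=1093,3511$ --- that is acceptable, since the paper likewise delegates its mod-$p^2$ sum to {\tt Maple}, but the check does have to be performed.
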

\begin{proof} Let $\a_p(n)$ denote the sum of the coefficients in the base-$p$ expansion of $n$. For $0\le i\le p^e-2$,
\begin{eqnarray*} \nu_p\tbinom{p^e-2}i&=&\tfrac 1{p-1}\bigl(\a_p(i)+\a_p(p^e-2-i)-\a_p(p^e-2)\bigr)\\
&=&\tfrac1{p-1}\bigl(\a_p(i)+(p-1)e-\a_p(i+1)-((p-1)e-1)\bigr)\\
&=&\tfrac1{p-1}\bigl((\a_p(i+1)-1+(p-1)\nu_p(i+1))\\
&&+(p-1)e-\a_p(i+1)-(p-1)e+1\bigr)\\
&=&\nu_p(i+1).\end{eqnarray*}
This equals $e-1$ when $i=c\cdot p^{e-1}-1$ for $1\le c\le p-1$ , and is less than $e-1$ for all other relevant $i$.
The result when $p=2$ follows since there is a single term of smallest exponent in the sum which defines $f(2^e-2)$.

Now let $p$ be odd.
We will show that for $1\le c\le p-1$
\begin{equation}\label{bc}\binom{p^e-2}{c\cdot p^{e-1}-1}/p^{e-1}\equiv(-1)^{c+1}c\mod p.\end{equation}
Then we obtain
$$f(p^e-2)={\frac1{p^{e-1}}}\bigl(\sum_{c=1}^{p-1}(-1)^{c+1}{\frac1c}+Ap)+\sum_{j<e-1}\frac1{p^j}A_j,$$
where $A,A_j\in\Z_p$. By a result of Eisenstein (\cite{Eis})
\begin{equation}\sum_{c=1}^{p-1}(-1)^{c+1}\frac1c\equiv \frac{2^p-2}p\mod p.\label{Eis}\end{equation}
If $p$ is not a Wieferich prime, then $\nu_p(\frac{2^p-2}p)=0$ and so $\nu_p(f(p^e-2))=-(e-1)$.

To prove (\ref{bc}), we note that the LHS equals
$$\frac{(p^e-2)\cdots(p^e-cp^{e-1})}{2\cdots(cp^{e-1}-1)p^{e-1}}\equiv \frac {-2}2\cdots\frac{-(cp^{e-1}-1)}{cp^{e-1}-1}\cdot(-c)$$
mod $p$. There are $cp^{e-1}-2$ of the fractions equal to $-1$, which when multiplied together and by $-c$ give the desired $(-1)^{c+1}c$.

If $p=1093$ or 3511, we consider $p^{e-1}f(p^e-2)$ mod $p^2$. This equals $$2p^{e-1}\sum_{i=1}^{(p^2-1)/2}\binom{p^e-2}{i\cdot p^{e-2}-1}^{-1}.$$
Similarly to the above argument for (\ref{bc}), we can show that
$$\frac{p^{e-1}}{\binom{p^e-2}{i\cdot p^{e-2}-1}}\equiv(-1)^{i-[\frac ip]+1}\frac p{i\binom{p-1}{[i/p]}}\mod p^2.$$
{\tt Maple} computes $$\sum_{i=1}^{(p^2-1)/2}(-1)^{i-[\frac ip]+1}\frac p{i\binom{p-1}{[i/p]}}\mod p^2$$
to equal $487\cdot1093$ when $p=1093$ and $51\cdot3511$ if $p=3511$. Thus $\nu_p(p^{e-1}f(p^e-2))=1$ when $p\in\{1093,3511\}$.
\end{proof}

\begin{proof}[Proof of Theorem \ref{2^e-k}] The recursive formula
\begin{equation}\label{rec1}f(n)=\frac{n+1}{2n}f(n-1)+1\end{equation}
was proved in \cite{Rock}. We invert it to obtain
\begin{equation}\label{rec2}f(n-1)=(f(n)-1)\frac{2n}{n+1}.\end{equation}

Our proof of Theorem \ref{2^e-k} is by induction on $k$, with Lemma \ref{2lem} being the case $k=1$.
Assume the theorem has been proved for $k-1$.
Let $$\nbar=\begin{cases}n&p=2\\1&p\text{ odd, not Wieferich}\\
2&p\in\{1093,3511\}.\end{cases}$$
 Then, with $\nu_p(u)=\nu_p(u')=0$,
$$f(p^e-k)-1=\frac{u\cdot p^{\nu_p(k-1)+\overline{k-1}}}{p^e}-1=u'\cdot p^{\nu_p(k-1)+\overline{k-1}-e},$$
since $e>\nu_p(k-1)+\overline{k-1}$. Now, using (\ref{rec2}),
$$f(p^e-k-1)=u'\cdot p^{\nu_p(k-1)+\overline{k-1}-e}\cdot\frac{2(p^e-k)}{p^e-k+1}=u''\cdot p^{\nu_p(k)+\overline{k}-e},$$
with $\nu_p(u'')=0$, since $\nu_p(p^e-k)=\nu_p(k)$ and $\nu_p(p^e-k+1)=\nu_p(k-1)$.
\end{proof}

The following proposition was used in the proof of \ref{-1} and will be used in the proof of \ref{0}.
\begin{prop}\label{p^e-1} For any odd prime $p$ and $e\ge 1$ and $1\le c\le p-1$,
$$f(cp^e-1)-f(cp^{e-1}-1)\equiv c(1-2^{p-1})p^{e-1}f(cp^{e-1}-1)\mod p^{e+1}.$$
\end{prop}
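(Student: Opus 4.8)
The plan is to reduce everything to the classical closed form
$$f(n)=\frac{n+1}{2^{n+1}}\,S(n+1),\qquad S(m):=\sum_{k=1}^m\frac{2^k}{k},$$
which follows from the recursion (\ref{rec1}) by an immediate induction: since $S(n+1)=S(n)+\frac{2^{n+1}}{n+1}$, the closed form equals $\frac{n+1}{2^{n+1}}S(n)+1$, so it satisfies (\ref{rec1}) and agrees with $f$ at $n=0$. Write $q=\frac{2^{p-1}-1}{p}$ for the Fermat quotient, so that $1-2^{p-1}=-pq$, and set $n=cp^{e-1}$ and $N=cp^e=pn$. Then $f(cp^e-1)=\frac{pn}{2^{pn}}S(pn)$ and $f(cp^{e-1}-1)=\frac{n}{2^n}S(n)$, and since $c(1-2^{p-1})p^{e-1}=-cp^eq$, the asserted congruence becomes
$$\frac{n}{2^n}\Bigl[\frac{p}{2^{(p-1)n}}S(pn)-S(n)+cp^e q\,S(n)\Bigr]\equiv0\pmod{p^{e+1}}.$$

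Next I would carry out the valuation bookkeeping. Because $\nu_p(n/2^n)=e-1$ exactly (as $1\le c\le p-1$), it suffices to show the bracket has $\nu_p\ge2$. I record for later that $\nu_p(S(n))\ge-(e-1)$, since the most $p$-divisible denominator among $1,\dots,n$ is $p^{e-1}$. I then split $S(pn)=A+B$ according to divisibility of the index by $p$, where $A=\sum_{j=1}^n\frac{2^{pj}}{pj}$ and $B=\sum_{p\nmid k,\,k\le pn}\frac{2^k}{k}$.

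For $A$, using $2^p=2(1+pq)$ I would write $pA=\sum_{j=1}^n\frac{2^j(1+pq)^j}{j}$ and expand $(1+pq)^j=1+jpq+R_j$. The identity $m\binom jm=j\binom{j-1}{m-1}$ gives $\nu_p(R_j)\ge\nu_p(j)+2$, so $\sum_j 2^jR_j/j$ has $\nu_p\ge2$ and hence $pA=S(n)+pq(2^{n+1}-2)+E$ with $\nu_p(E)\ge2$. Since $2^{(p-1)n}=(1+pq)^n\equiv1+cp^eq\pmod{p^{e+1}}$, its inverse is $\equiv1-cp^eq$, and multiplying $S(n)$ by this inverse produces exactly the term $-cp^eq\,S(n)$ that cancels the $+cp^eq\,S(n)$ already present (the leftover $\delta\,S(n)$ with $\nu_p(\delta)\ge e+1$ then has $\nu_p\ge2$ by the bound on $\nu_p(S(n))$). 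After these cancellations the bracket equals $\frac{p}{2^{(p-1)n}}\bigl[q(2^{n+1}-2)+B\bigr]$ plus terms already shown to have $\nu_p\ge2$.

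The crux is the final cancellation inside this bracket, and I expect it to be the main obstacle. Reducing $B$ block-by-block modulo $p$ and using $2^p\equiv2$ gives $B\equiv(2^n-1)\sum_{r=1}^{p-1}\frac{2^r}{r}\pmod p$; expanding $(1-2)^p=-1$ together with $\binom{p-1}{r-1}\equiv(-1)^{r-1}$ yields the Fermat-quotient congruence $\sum_{r=1}^{p-1}\frac{2^r}{r}\equiv-2q\pmod p$, a signed variant of the Eisenstein congruence (\ref{Eis}). Hence $B\equiv-2q(2^n-1)=-q(2^{n+1}-2)\pmod p$, so $q(2^{n+1}-2)+B\equiv0\pmod p$ and the remaining piece picks up the extra factor of $p$ needed to reach $\nu_p\ge2$. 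The algebraic reductions before this point are routine; what requires care is verifying every error estimate, since the denominators $1/j$ with $\nu_p(j)$ as large as $e-1$ make the bounds delicate, and it is precisely the matching between the Fermat quotient in $2^{(p-1)n}$ and the one emerging from $B$ that forces the clean answer.
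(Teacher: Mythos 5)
Your argument is correct, but it takes a genuinely different route from the paper's. The paper works term-by-term with the binomial sum: it pairs consecutive inverses via $\binom{cp^e-1}{ip+2j-1}^{-1}+\binom{cp^e-1}{ip+2j}^{-1}=\frac{cp^e}{ip+2j}\binom{cp^e-1}{ip+2j}^{-1}$, reduces each pair mod $p^{e+1}$ to $c\binom{cp^{e-1}-1}{i}^{-1}$ times the corresponding $c=1$, $i=0$ pair, sums over $j$ using Lemma \ref{1-2} (whose arithmetic content is Eisenstein's congruence (\ref{Eis})), and disposes of the indices divisible by $p$ with Lemma \ref{newlem}, which rests on Wolstenholme's theorem. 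You instead pass to Sury's closed form $f(n)=\frac{n+1}{2^{n+1}}\sum_{k=1}^{n+1}\frac{2^k}{k}$, which turns the whole statement into a congruence for the sums $S(m)=\sum_{k\le m}2^k/k$; your handling of the $p$-divisible indices is a completely elementary expansion of $(2^p)^j=2^j(1+pq)^j$ (so Wolstenholme is not needed here at all), and the remaining indices reduce mod $p$ to the congruence $\sum_{r=1}^{p-1}2^r/r\equiv-2q$, which plays exactly the role that Lemma \ref{1-2} and (\ref{Eis}) play in the paper. I checked the valuation bookkeeping you flagged as delicate — the bound $\nu_p(S(n))\ge-(e-1)$, the estimate $\nu_p\bigl(\tfrac1j\tbinom jm(pq)^m\bigr)\ge m-\nu_p(m)\ge2$ for $m\ge2$, and the expansion $2^{(p-1)n}\equiv1+cp^eq\pmod{p^{e+1}}$ — and all of it holds, including in the boundary case $e=1$. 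What each approach buys: yours is self-contained and arguably cleaner for this one proposition, while the paper's decomposition produces intermediate facts (the congruence (\ref{part}) for the $k\not\equiv0\ (p)$ part, and Lemma \ref{newlem}) that are reused verbatim in the proofs of Proposition \ref{p^3} and Lemma \ref{eind}, where a closed-form shortcut is less readily available.
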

\begin{proof} 
For any $1\le j\le (p-1)/2$, we have
\begin{eqnarray*}&&
\binom{cp^e-1}{ip+2j-1}^{-1}+\binom{cp^e-1}{ip+2j}^{-1}=\binom{cp^e}{ip+2j}\bigg/\biggl(\binom{cp^e-1}{ip+2j-1}\binom{cp^e-1}{ip+2j}\biggr)\\
&=&\frac{cp^e}{ip+2j}\bigg/\binom{cp^e-1}{ip+2j}\equiv\frac{cp^e}{2j}\biggl/\biggl(\binom{cp^{e-1}-1}i\binom{p-1}{2j}\biggr)\mod p^{e+1}.
\end{eqnarray*}
The case $c=1$, $i=0$ says
$$\binom{p^e-1}{2j-1}^{-1}+\binom{p^e-1}{2j}^{-1}\equiv \frac{p^e}{2j}\biggl/\binom{p-1}{2j}\mod p^{e+1}.$$
Combining these, we obtain
$$\binom{cp^e-1}{ip+2j-1}^{-1}+\binom{cp^e-1}{ip+2j}^{-1}\equiv c\binom{cp^{e-1}-1}i^{-1}\bigl(\binom{p^e-1}{2j-1}^{-1}+\binom{p^e-1}{2j}^{-1}\bigr)\mod p^{e+1}.$$
Summing this over $j$ gives
\begin{eqnarray*}\sum_{k=1}^{p-1}\binom{cp^e-1}{ip+k}^{-1}&\equiv& c\binom{cp^{e-1}-1}i^{-1}\sum_{k=1}^{p-1}\binom{p^e-1}k^{-1}\\
&\equiv& c(1-2^{p-1})p^{e-1}\binom{cp^{e-1}-1}i^{-1}\mod p^{e+1},\end{eqnarray*}
using Lemma \ref{1-2} at the last step. Now sum over $i$ to obtain
\begin{equation}\label{part}\sum_{k\not\equiv0\ (p)}\binom{cp^e-1}{k}^{-1}\equiv c(1-2^{p-1})p^{e-1}f(cp^{e-1}-1)\mod p^{e+1}.\end{equation}

Summing $\binom{cp^e-1}{pj}^{-1}-\binom{cp^{e-1}-1}j^{-1}$ over $j$ gives
$$\sum_{k\equiv0\ (p)}\binom{cp^e-1}{k}^{-1}-f(cp^{e-1}-1)\equiv0\mod p^{e+2}$$
by Lemma \ref{newlem}.
Add this to (\ref{part}) to obtain the claim of the proposition.
\end{proof}

The above proof required the following lemmas.
\begin{lem}\label{newlem} If $1<u<p$ and $j<up^{e-1}$, and $b=ap+u$ with $a\ge0$, then
$$\nu_p\bigl(\tbinom{bp^e-1}{pj}^{-1}-\tbinom{bp^{e-1}-1}j^{-1}\bigr)\ge e+2.$$
\end{lem}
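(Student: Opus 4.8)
The plan is to factor $\tbinom{bp^e-1}{pj}$ by separating, in the product $\tbinom{bp^e-1}{pj}=\product_{i=1}^{pj}\frac{bp^e-i}{i}$, the factors whose index $i$ is divisible by $p$ from those whose index is not. The divisible indices $i=p\el$, $\el=1,\dots,j$, contribute $\product_{\el=1}^{j}\frac{bp^{e-1}-\el}{\el}=\tbinom{bp^{e-1}-1}{j}$, so that $\tbinom{bp^e-1}{pj}=\tbinom{bp^{e-1}-1}{j}\cdot P$, where $P=\product_{p\nmid i,\,1\le i\le pj}\frac{bp^e-i}{i}$. Passing to inverses, the quantity to be estimated becomes $\tbinom{bp^{e-1}-1}{j}^{-1}(P^{-1}-1)$, so the lemma reduces to two independent claims: that $\tbinom{bp^{e-1}-1}{j}$ is a $p$-adic unit, and that $\nu_p(P^{-1}-1)\ge e+2$.

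For the first claim I would use Lucas's theorem. Since $1<u<p$, the base-$p$ digits of $bp^{e-1}-1$ are $p-1$ in positions $0,\dots,e-2$ and $u-1$ in position $e-1$ (with the digits of $a$ above). The hypothesis $j<up^{e-1}$ forces the position-$(e-1)$ digit of $j$ to be at most $u-1$, while the lower digits are automatically at most $p-1$; hence every base-$p$ digit of $j$ is dominated by the corresponding digit of $bp^{e-1}-1$, so $p\nmid\tbinom{bp^{e-1}-1}{j}$ and $\nu_p(\tbinom{bp^{e-1}-1}{j}^{-1})=0$.

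For the second claim, write each remaining factor as $\frac{bp^e-i}{i}=-\bigl(1-\tfrac{bp^e}{i}\bigr)$. Since there are $(p-1)j$ such factors and $p$ is odd, the signs cancel and $P=\product_{p\nmid i}(1-bp^e/i)$, a product of terms $\equiv1\mod p^e$, so $P$ is a unit and $\nu_p(P^{-1}-1)=\nu_p(P-1)$. Expanding, $P-1=-e_1+e_2-\cdots$, where $e_r$ is the $r$-th elementary symmetric function of the quantities $bp^e/i$. Here $e_1=bp^e\,S$ with $S=\ssum_{p\nmid i\le pj}1/i$, the quadratic term is $e_2=\tfrac12(bp^e)^2(S^2-T)$ with $T=\ssum_{p\nmid i\le pj}1/i^2$, and $\nu_p(e_r)\ge re$ for $r\ge3$.

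The crux, and the only place where arithmetic beyond bookkeeping enters, is estimating the harmonic-type sums $S$ and $T$. The plan is to group the indices $1\le i\le pj$, $p\nmid i$, into the $j$ blocks $i=\el p+r$ with $1\le r\le p-1$, expand $1/(\el p+r)^s$ $p$-adically, and invoke Wolstenholme's congruences $\ssum_{r=1}^{p-1}1/r\equiv0\mod p^2$ and $\ssum_{r=1}^{p-1}1/r^2\equiv0\mod p$ to obtain $\nu_p(S)\ge2$ and $\nu_p(T)\ge1$. Then $\nu_p(e_1)=e+\nu_p(S)\ge e+2$, while $\nu_p(e_2)=2e+\nu_p(S^2-T)\ge 2e+1\ge e+2$ and $\nu_p(e_r)\ge re\ge 3e\ge e+2$ for $r\ge3$ (all for $e\ge1$), so $\nu_p(P-1)\ge e+2$. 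Combining the two claims gives $\nu_p\bigl(\tbinom{bp^e-1}{pj}^{-1}-\tbinom{bp^{e-1}-1}{j}^{-1}\bigr)\ge e+2$. I expect the main obstacle to be exactly these Wolstenholme estimates: they require $p\ge5$, so the prime $p=3$ (and the small primes flagged in the introduction) would need to be handled separately, as is done elsewhere in the paper.
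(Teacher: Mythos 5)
Your proof is correct and follows essentially the same route as the paper's: both factor out the indices divisible by $p$ to produce $\binom{bp^{e-1}-1}{j}$ (a unit), reduce the claim to showing that a product over the remaining indices is $\equiv1\bmod p^{e+2}$, and obtain that congruence by grouping the non-multiples of $p$ into blocks of $p-1$ and invoking Wolstenholme's congruences for $\sum 1/r$ and $\sum 1/r^2$. The only cosmetic difference is that you expand $\prod(1-bp^e/i)$ in elementary symmetric functions of the $bp^e/i$ and bound all the $e_r$, whereas the paper works with $\prod i-\prod(i-bp^e)$ and truncates at degree $2$ in $bp^e$; your remark that $p=3$ needs separate treatment matches the paper's own caveat.
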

\begin{proof}
Note that
\begin{eqnarray*}&&\binom{bp^e-1}{pj}^{-1}-\binom{bp^{e-1}-1}j^{-1}\\
&=&\frac{(pj)(pj-1)\cdots1}{(bp^e-pj)(bp^e-(pj-1))\cdots(bp^e-1)}-\frac{j(j-1)\cdots1}{(bp^{e-1}-j)(bp^{e-1}-(j-1))\cdots(bp^{e-1}-1)}.\end{eqnarray*}
When the factors in the numerator and denominator of the second quotient are multiplied by $p$, they just give the $p$-divisible
factors of the first quotient. Thus the expression equals
$\ds\bigl(\prod \tfrac i{bp^e-i}-1\bigr)\cdot\tbinom{bp^{e-1}-1}j^{-1}$, where the product is taken over $0<i<pj$ with $i\not\equiv 0$ mod $p$.
Since $\nu_p\binom{bp^{e-1}-1}j=0$ and the number of values of $i$ is even, this has the same $p$-exponent as $\prod i-\prod (i-bp^e)$,
and this is divisible by $p^{e+2}$.

To see this, we will show that
$$\prod_{i=1}^{p-1}(pk+i)\equiv\prod_{i=1}^{p-1}(pk+i-bp^e)\mod p^{e+2}.$$
The difference, mod $p^{e+2}$, is, up to unit multiples,
\begin{eqnarray*}&&bp^e\sigma_{p-2}(pk+1,\ldots,pk+p-1)+(bp^e)^2\sigma_{p-3}(pk+1,\ldots,pk+p-1)\\
&=&bp^e\sigma_{p-2}(1,\ldots,p-1)+(bp^e\cdot pk+(bp^e)^2)\sigma_{p-3}(1,\ldots,p-1).\end{eqnarray*}
This is 0 since
 the coefficient of $x$ (resp.~$x^2$) in $(x+1)\cdots(x+p-1)$ is divisible by $p^2$ (resp.~$p$).
Indeed, the first is $(p-1)!(1+\cdots+\frac1{p-1})$, and this is divisible by $p^2$ for $p>3$ by \cite{Wol}.
Mod $p$, the polynomial equals $(x^2-1^2)(x^2-2^2)\cdots(x^2-(\frac{p-1}2)^2)$, and its coefficient of $x^2$ is congruent to
$1+\frac14+\cdots+\frac1{((p-1)/2)^2}$. Since $i^2\equiv(p-i)^2$ mod $p$, this is congruent to $\frac12\ds\sum_{i=1}^{p-1}\tfrac1{i^2}$,
and this is 0 mod $p$, also by \cite{Wol}.\end{proof}

\begin{lem}\label{1-2} For any odd prime $p$,
$$\sum_{k=1}^{p-1}\tbinom{p^e-1}k^{-1}\equiv p^{e-1}( 1-2^{p-1}) \mod p^{e+1}.$$
\end{lem}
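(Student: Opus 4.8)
The plan is to evaluate $\sum_{k=1}^{p-1}\binom{p^e-1}{k}^{-1}$ modulo $p^{e+1}$ by pairing the summands as in the proof of Proposition \ref{p^e-1}. Recall from that proof the identity
$$\binom{p^e-1}{2j-1}^{-1}+\binom{p^e-1}{2j}^{-1}\equiv\frac{p^e}{2j}\bigg/\binom{p-1}{2j}\mod p^{e+1},$$
valid for $1\le j\le(p-1)/2$. Summing this over $j$ accounts for all $k$ with $1\le k\le p-1$ (since $k$ ranges over $\{1,\ldots,p-1\}$ and these pair up as $\{2j-1,2j\}$), and reduces the problem to showing
$$p^e\sum_{j=1}^{(p-1)/2}\frac{1}{2j\binom{p-1}{2j}}\equiv p^{e-1}(1-2^{p-1})\mod p^{e+1},$$
i.e.\ to proving the purely combinatorial congruence
$$\sum_{j=1}^{(p-1)/2}\frac{1}{2j\binom{p-1}{2j}}\equiv \frac{1-2^{p-1}}{p}\mod p.$$

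First I would simplify the left-hand side. Since $2j\binom{p-1}{2j}=(p-1)\binom{p-2}{2j-1}$ and $p-1\equiv-1\mod p$, each term is congruent to $-\binom{p-2}{2j-1}^{-1}\mod p$; alternatively one can clear denominators and work with $\binom{p-1}{2j}$ directly. The key arithmetic fact is Wolstenholme-type: modulo $p$ one has $\binom{p-1}{k}\equiv(-1)^k\mod p$, so $1/\binom{p-1}{2j}\equiv1\mod p$ to leading order, and the sum $\sum_{j=1}^{(p-1)/2}1/(2j)$ is essentially half the harmonic-type sum. The right-hand side $(1-2^{p-1})/p=-(2^{p-1}-1)/p$ is (up to sign and factor $2$) the Fermat quotient, which by Eisenstein's congruence (\ref{Eis}) equals $\tfrac12\sum_{c=1}^{p-1}(-1)^{c+1}/c\mod p$. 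The plan is therefore to match the combinatorial sum against this Fermat-quotient expression by expanding $1/\binom{p-1}{2j}$ one order beyond the leading term and using the standard Wolstenholme congruences $\sum_{i=1}^{p-1}1/i\equiv0\mod p^2$ and $\sum 1/i^2\equiv0\mod p$ (cited as \cite{Wol} in Lemma \ref{newlem}) to kill the correction terms.

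The main obstacle I anticipate is the bookkeeping in this last congruence: one must expand $\binom{p-1}{2j}^{-1}\mod p$ carefully enough to capture the Fermat quotient, since the leading approximation $\binom{p-1}{2j}\equiv(-1)^{2j}=1$ is too coarse and loses exactly the information $(2^{p-1}-1)/p$ we are trying to extract. Concretely, writing $\binom{p-1}{k}=\prod_{i=1}^{k}(p-i)/i=(-1)^k\prod_{i=1}^k(1-p/i)\equiv(-1)^k(1-p\sum_{i=1}^k 1/i)\mod p^2$, the term of order $p$ is what feeds the Fermat quotient, and I would need to sum $(-1)^{2j}\sum_{i=1}^{2j}1/i$ against the weights $1/(2j)$ and reorganize into $\sum_{c}(-1)^{c+1}/c$. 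Once that alternating harmonic sum is identified with Eisenstein's expression (\ref{Eis}), the lemma follows. I expect the combinatorial identity $2j\binom{p-1}{2j}=(p-1)\binom{p-2}{2j-1}$ and the telescoping/pairing already used in Proposition \ref{p^e-1} to do most of the structural work, leaving only this residue computation.
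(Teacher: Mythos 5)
Your setup coincides with the paper's: pair $k=2j-1$ with $k=2j$ to get $\frac{p^e}{2j}\binom{p^e-1}{2j}^{-1}$, and reduce the lemma to the single mod-$p$ congruence $\sum_{j=1}^{(p-1)/2}\frac{1}{2j\binom{p-1}{2j}}\equiv\frac{1-2^{p-1}}{p}\pmod p$. That reduction is correct. The gap is in where you then look for the Fermat quotient. Since the target is a congruence mod $p$ and $\binom{p-1}{2j}\equiv1\pmod p$, writing $\binom{p-1}{2j}^{-1}\equiv1+p\sum_{i\le 2j}\frac1i\pmod{p^2}$ puts the entire "order-$p$ term that feeds the Fermat quotient" inside a quantity that is being read modulo $p$; it contributes exactly $0$, so the reorganization of $\sum_j\frac1{2j}\cdot p\sum_{i\le2j}\frac1i$ into an alternating harmonic sum produces nothing. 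What remains is the leading term $\frac12\sum_{j=1}^{(p-1)/2}\frac1j$, which you set aside as "essentially half the harmonic-type sum" to be handled by Wolstenholme. Wolstenholme does not apply: $\sum_{i=1}^{p-1}\frac1i\equiv0\pmod{p^2}$ concerns the full range, whereas the half-range sum satisfies $\sum_{j=1}^{(p-1)/2}\frac1j\equiv\frac{2(1-2^{p-1})}{p}\pmod p$, and this nonvanishing value \emph{is} the content of the lemma. (Check $p=5$: $1+\frac12\equiv-1$, while $2\cdot\frac{1-16}{5}=-6\equiv-1$.)

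The missing step is therefore precisely the congruence $\frac12\sum_{j=1}^{(p-1)/2}\frac1j\equiv\frac{1-2^{p-1}}{p}\pmod p$, which is where the paper spends its effort: it writes $\sum_{k=1}^{p-1}\frac{(-1)^k}{k}=\sum_{j=1}^{(p-1)/2}\bigl(\frac{1}{p-(2j-1)}-\frac{1}{2j-1}\bigr)$, reduces term by term mod $p$ using $\frac{1}{p-m}\equiv-\frac1m$ to obtain $\sum_{j=1}^{(p-1)/2}\frac1j$, and then applies Eisenstein's congruence (\ref{Eis}) to the alternating sum. You do invoke Eisenstein, but you aim it at the $O(p)$ correction terms rather than at the leading half-range harmonic sum. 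Discard the expansion of $\binom{p-1}{2j}^{-1}$ beyond leading order (it is unnecessary) and insert the above identification of $\sum_{j\le(p-1)/2}\frac1j$ with the alternating harmonic sum, and your argument closes along the paper's lines.
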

\begin{proof} Noting that $\binom{p^e-1}i\equiv(-1)^i$ mod $p$, and arguing as in the proof of \ref{p^e-1}, we obtain
$$\sum_{k=1}^{p-1}\tbinom{p^e-1}k^{-1}=\sum_{j=1}^{(p-1)/2}\frac {p^e}{2j}\binom{p^e-1}{2j}^{-1}\equiv\frac {p^e}2\sum_{j=1}^{(p-1)/2}\frac 1j\mod p^2.$$
Next note that, mod $p$
$$\sum_{k=1}^{p-1}\frac{(-1)^k}k=\sum_{j=1}^{(p-1)/2}\bigl(\frac1{p-(2j-1)}-\frac1{2j-1}\bigr)\equiv\sum_{j=1}^{(p-1)/2}\frac{-1}{\frac{p-1}2+j}
\equiv\sum_{j=1}^{(p-1)/2}\frac 1j.$$
Using (\ref{Eis}), we obtain the
claim of the lemma.
\end{proof}

\section{Results for good primes and 23}\label{goodsec}
In this section, we prove (\ref{pdiff}) (and hence Theorem \ref{onlyodd}), deferring most details of the proof for $p=23$ to Section \ref{23sec}.
We will prove the following result in Section \ref{23sec}.
\begin{prop} \label{p^3} If $p$ is any odd prime, $1\le c\le p-1$, and $\nu_p(f(c-1))>0$, then
$$f(cp-1)-f(c-1)\equiv c(1-2^{p-1})f(c-1)\mod p^3.$$
\end{prop}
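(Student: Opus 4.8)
The plan is to prove this as the sharpened $e=1$ case of Proposition \ref{p^e-1}. That proposition already gives $f(cp-1)-f(c-1)\equiv c(1-2^{p-1})f(c-1)\pmod{p^2}$ unconditionally, so the whole task is to gain one more power of $p$, and the hypothesis $\nu_p(f(c-1))>0$ is exactly the extra input that makes this possible. Note first that the hypothesis forces $c\ge2$ (since $f(0)=1$), and that for $p=3$ it is vacuous, as $f(0)=1$ and $f(1)=2$ are both units; so I may assume $p\ge5$ and freely invoke Wolstenholme's congruences \cite{Wol}. I would begin by splitting $f(cp-1)=S'+S''$, where $S'=\sum_{k\not\equiv0\,(p)}\binom{cp-1}k^{-1}$ and $S''=\sum_{j=0}^{c-1}\binom{cp-1}{pj}^{-1}$. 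Applying Lemma \ref{newlem} with $e=1$ and $b=u=c$ (legitimate since $2\le c\le p-1$) and summing over $j$ gives $S''\equiv f(c-1)\pmod{p^3}$ at once, so $f(cp-1)-f(c-1)\equiv S'\pmod{p^3}$ and the problem reduces to showing $S'\equiv c(1-2^{p-1})f(c-1)\pmod{p^3}$.

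For $S'=\sum_{i=0}^{c-1}T_i$, with $T_i=\sum_{k=1}^{p-1}\binom{cp-1}{ip+k}^{-1}$, I would carry the pairing argument of Proposition \ref{p^e-1} one order further. Pairing consecutive terms via $\binom{cp-1}{ip+2j-1}^{-1}+\binom{cp-1}{ip+2j}^{-1}=cp/\bigl((ip+2j)\binom{cp-1}{ip+2j}\bigr)$, the key new input is a mod-$p^2$ expansion of the binomial coefficient, which I would obtain factor by factor: the $p$-divisible factors assemble into $\binom{c-1}i$, each complete block of $p-1$ units contributes $1-cp\sum_{l=1}^{p-1}\tfrac1l\equiv1\pmod{p^2}$ by Wolstenholme, and the partial block contributes $1-cpH_{2j}$ with $H_{2j}=\sum_{l=1}^{2j}\tfrac1l$; thus $\binom{cp-1}{ip+2j}\equiv\binom{c-1}i(1-cpH_{2j})\pmod{p^2}$. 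Combined with $1/(ip+2j)\equiv\tfrac1{2j}(1-ip/(2j))$, this yields $T_i\equiv\tfrac{c}{\binom{c-1}i}\bigl(A+p^2(cB-iC)\bigr)\pmod{p^3}$, where $A=p\sum_j\tfrac1{2j}$, $B=\sum_j H_{2j}/(2j)$, and $C=\sum_j 1/(2j)^2$, all sums over $1\le j\le(p-1)/2$.

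Summing over $i$ gives $S'\equiv cAf(c-1)+cp^2\bigl(cBf(c-1)-C\sum_i i\binom{c-1}i^{-1}\bigr)\pmod{p^3}$, and here the hypothesis finishes the job. Since $A\equiv1-2^{p-1}\pmod{p^2}$ by Lemma \ref{1-2}, and $f(c-1)\equiv0\pmod p$, the quantity $(A-(1-2^{p-1}))f(c-1)$ is $O(p^3)$, so $cAf(c-1)\equiv c(1-2^{p-1})f(c-1)\pmod{p^3}$, which is the desired main term. The correction $cp^2\cdot cBf(c-1)$ vanishes modulo $p^3$ again because $f(c-1)\equiv0\pmod p$, and the correction $cp^2 C\sum_i i\binom{c-1}i^{-1}$ vanishes because $C=\tfrac14\sum_{j=1}^{(p-1)/2}j^{-2}\equiv0\pmod p$, which follows from $\sum_{j=1}^{p-1}j^{-2}\equiv0\pmod p$ (Wolstenholme) together with $j^{-2}\equiv(p-j)^{-2}$. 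Assembling these gives the claim modulo $p^3$.

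I expect the main obstacle to be the bookkeeping of the second-order ($p^2$) corrections in the middle paragraph: obtaining the clean form $\binom{cp-1}{ip+2j}\equiv\binom{c-1}i(1-cpH_{2j})$ and then verifying that every surviving $p^2$-correction is annihilated either by the hypothesis $f(c-1)\equiv0\pmod p$ or by Wolstenholme's congruences. The conceptual point, once the corrections are seen to factor through $f(c-1)$ or through $\sum j^{-2}$, is simply that the extra divisibility of $f(c-1)$ is precisely what upgrades the mod-$p^2$ statement of Proposition \ref{p^e-1} to mod $p^3$.
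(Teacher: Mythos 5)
Your proposal is correct and follows essentially the same route as the paper: split off the $k\equiv0\ (p)$ terms via Lemma \ref{newlem}, pair the remaining terms through the identity $\binom{cp-1}{ip+2j-1}^{-1}+\binom{cp-1}{ip+2j}^{-1}=cp/\bigl((ip+2j)\binom{cp-1}{ip+2j}\bigr)$ so that a mod-$p^2$ expansion suffices, and finish with Eisenstein/Lemma \ref{1-2} plus the hypothesis $\nu_p(f(c-1))>0$. The one genuine (small) divergence is in how the $i$-linear correction is killed: the paper routes through $\binom{cp-2}{ip+2j-1}$, where the coefficient of $i$ in the expansion is a unit, and must invoke Lemma \ref{c-1} ($\sum i\binom{c-1}i^{-1}\equiv0\ (p)$); you expand $\binom{cp-1}{ip+2j}$ directly, observe that $S_{2j}-S_{p-1-2j}\equiv0\ (p)$ makes that source of $i$-dependence vanish, and the remaining $i$-term, carrying $\sum_j(2j)^{-2}$, dies by Wolstenholme --- so Lemma \ref{c-1} is not needed. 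Both arguments are sound; yours is marginally leaner here, and you correctly flag the $p=3$ vacuity that the paper leaves implicit.
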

The following corollary follows easily from this and Proposition \ref{p^e-1}.

\begin{cor} If $p$ is an odd prime, $\nu_p(f(c-1))\le2$  and $e\ge0$, then $\nu_p(f(cp^e-1))=\nu_p(f(c-1))$.\label{0}\end{cor}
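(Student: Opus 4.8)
The plan is to prove Corollary \ref{0} by induction on $e$, using Proposition \ref{p^3} to establish the base case and Proposition \ref{p^e-1} to drive the inductive step. The claim is that $\nu_p(f(cp^e-1))=\nu_p(f(c-1))$ whenever $\nu_p(f(c-1))\le 2$. The hypothesis $\nu_p(f(c-1))\le 2$ is what keeps the relevant valuations small enough that the congruences in the two propositions actually pin down the leading $p$-adic term rather than being swamped by the error terms.

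First I would handle the case $e=0$, which is trivially an equality, and then treat $e=1$ using Proposition \ref{p^3}. Write $v=\nu_p(f(c-1))$. If $v=0$ the corollary is immediate since then $\nu_p(f(cp^e-1))$ and $\nu_p(f(c-1))$ are both forced to be $0$ by the congruence $f(cp-1)\equiv f(c-1)\cdot(1+c(1-2^{p-1}))\pmod{p^3}$, noting the unit nature of $f(c-1)$. If $0<v\le 2$, Proposition \ref{p^3} gives $f(cp-1)-f(c-1)\equiv c(1-2^{p-1})f(c-1)\pmod{p^3}$; since $\nu_p(c(1-2^{p-1}))\ge 1$ for $c$ a unit (and using that $p$ is odd, so $1-2^{p-1}\equiv 0\pmod p$ but contributes at most one extra power in the non-Wieferich situation), the correction term has valuation strictly exceeding $v$ as long as $v\le 2$ and the modulus $p^3$ exceeds both $v$ and the valuation of the correction. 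Hence $\nu_p(f(cp-1))=\nu_p(f(c-1))=v$.

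For the inductive step, I would assume $\nu_p(f(cp^{e-1}-1))=v$ and apply Proposition \ref{p^e-1}, which states
$$f(cp^e-1)-f(cp^{e-1}-1)\equiv c(1-2^{p-1})p^{e-1}f(cp^{e-1}-1)\pmod{p^{e+1}}.$$
The valuation of the right-hand correction term is at least $1+(e-1)+v=e+v$ (again using $\nu_p(1-2^{p-1})\ge 1$), whereas the target valuation $v$ satisfies $v<e+v$ and $v\le 2<e+1$ for $e\ge 2$. Thus the correction has strictly larger valuation than $f(cp^{e-1}-1)$ and is visible modulo $p^{e+1}$, so $\nu_p(f(cp^e-1))=\nu_p(f(cp^{e-1}-1))=v$, completing the induction.

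The main obstacle I anticipate is bookkeeping the inequalities so that the error terms in both propositions are genuinely negligible: one must verify that the modulus ($p^3$ in Proposition \ref{p^3}, $p^{e+1}$ in Proposition \ref{p^e-1}) strictly exceeds the relevant valuation $v\le 2$ at every stage, and that the factor $1-2^{p-1}$ does not accidentally raise the correction's valuation above the modulus in a way that loses information (this is precisely why the hypothesis restricts to $v\le 2$ and why the Wieferich primes, where $\nu_p(1-2^{p-1})\ge 2$, are implicitly accommodated—there the correction is even smaller, only strengthening the conclusion). Checking these numerical margins carefully, especially the boundary cases $v=2$ and $e=2$, is the only delicate point; the structural argument is otherwise a clean induction.
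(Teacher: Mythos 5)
Your argument is essentially the paper's own proof: induction on $e$ driven by Proposition \ref{p^e-1}, with Proposition \ref{p^3} supplying the sharper mod-$p^3$ congruence needed to start the induction when the valuation is large, and your bookkeeping of the margins (correction term of valuation $\ge 1+(e-1)+v$ versus modulus $p^{e+1}$, with $p^3$ needed precisely when $v=2$) is correct. One small slip: in the $v=0$ case you invoke the congruence of Proposition \ref{p^3}, whose hypothesis $\nu_p(f(c-1))>0$ is not satisfied there; but Proposition \ref{p^e-1} with $e=1$ already gives the same congruence mod $p^2$, which suffices when $v=0$ (indeed for all $v\le1$, as the paper notes), so the proof stands after that substitution.
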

\begin{proof}  Since $1-2^{p-1}$ is divisible by $p$,  the result when $\nu_p(f(c-1))\le 1$ follows by induction on $e$ using Proposition \ref{p^e-1}.
The result when $\nu_p(f(c-1))=2$ follows in the same way, using Proposition \ref{p^3} for the first step.
\end{proof}

Now we can prove the following result, which implies (\ref{pdiff}) for good primes, since $\nu_p(i+1)\le e-1$ here.
\begin{prop} If $p$ is odd, $\nu_p(f(c-1))\le1$, $0\le i<p^e-1$, and $1\le c\le p-1$, then
\begin{equation}\nu_p(f(cp^e+i)-f(i))=-e+\nu_p(i+1)+\nu_p(f(c-1)).\label{3.2}\end{equation}\label{fce}
\end{prop}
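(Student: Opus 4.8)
The plan is to fix $c$ and $e$ and induct on $i$ over the range $0\le i\le p^e-2$, using the recursion (\ref{rec1}), $f(n)=\frac{n+1}{2n}f(n-1)+1$. Throughout write $v=\nu_p(f(c-1))\in\{0,1\}$ and set $g_i=f(cp^e+i)-f(i)$; the goal is to show $\nu_p(g_i)=-e+\nu_p(i+1)+v$. For the base case $i=0$, apply (\ref{rec1}) with $n=cp^e$ to get
$$g_0=f(cp^e)-1=\frac{cp^e+1}{2cp^e}f(cp^e-1),$$
so that $\nu_p(g_0)=-e+\nu_p(f(cp^e-1))$, and Corollary \ref{0} (applicable since $v\le1\le2$) gives $\nu_p(f(cp^e-1))=v$, matching the asserted formula.

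For the inductive step I would apply (\ref{rec1}) to both $n=cp^e+i+1$ and $n=i+1$ and subtract. Writing $f(cp^e+i)=f(i)+g_i$, the two coefficient terms combine, the cross terms telescoping to a numerator $-cp^e$, to yield the two-term recurrence
$$g_{i+1}=\frac{cp^e+i+2}{2(cp^e+i+1)}\,g_i-\frac{cp^e}{2(cp^e+i+1)(i+1)}\,f(i).$$
Since $i+1,i+2<p^e$ over the whole range, $\nu_p(cp^e+i+1)=\nu_p(i+1)$ and $\nu_p(cp^e+i+2)=\nu_p(i+2)$; feeding in the inductive value of $\nu_p(g_i)$ shows the first term has valuation exactly $-e+\nu_p(i+2)+v$, which is precisely the target value for $g_{i+1}$. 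It therefore remains only to prove that the second (error) term has strictly larger valuation, i.e. that
$$e-2\nu_p(i+1)+\nu_p(f(i))>-e+\nu_p(i+2)+v.$$

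The main obstacle is exactly this inequality, which amounts to a lower bound on $\nu_p(f(i))$. When $\nu_p(i+1)=0$ it is immediate: Kummer's theorem gives $\nu_p(f(i))\ge-(e-1)$ (no base-$p$ addition producing $i<p^e$ can carry out of the top digit), and this already beats the right-hand side, which is at most $0$. The difficulty concentrates in the case $\nu_p(i+1)=a\ge1$, where one needs the sharper estimate $\nu_p(f(i))\ge 2(a-e+1)$. Writing $i+1=up^a$ with $p\nmid u$ and $u<p^{e-a}$, the case in which $u$ is a single digit is exactly Corollary \ref{0}, whose hypothesis $\nu_p(f(u-1))\le2$ holds precisely because $p$ is good or $p=23$. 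The genuinely technical point, and where I expect the real work to lie, is the case of large $a$ with $u$ having several nonzero base-$p$ digits, for which the elementary Kummer bound is too weak; this requires a refined lower bound on $\nu_p(f(up^a-1))$, which I would establish as an auxiliary lemma by propagating Corollary \ref{0} and Proposition \ref{p^e-1} through the successive digits of $u$. It is this auxiliary bound that is indispensable to the good-prime hypothesis. Once it is in hand, the error term is negligible, the induction closes, and since $\nu_p(i+1)\le e-1$ the resulting equality forces $\nu_p(g_i)\le0$, which is (\ref{pdiff}).
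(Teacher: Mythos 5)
Your skeleton---fixing $c$ and $e$, inducting on $i$ with the two-term recurrence derived from (\ref{rec1}), settling $i=0$ via Corollary \ref{0}, and reducing everything to showing the error term $\frac{cp^e}{2(cp^e+i+1)(i+1)}f(i)$ has strictly larger valuation than the main term---is exactly the paper's argument. But there is a genuine gap at precisely the decisive step. You correctly isolate the inequality $e-2\nu_p(i+1)+\nu_p(f(i))>-e+\nu_p(i+2)+v$, and then, for $a=\nu_p(i+1)\ge1$, you do not prove it: you assert that a ``refined lower bound on $\nu_p(f(up^a-1))$'' is required, to be obtained by propagating Corollary \ref{0} and Proposition \ref{p^e-1} through the digits of $u$, and you declare this the place where the good-prime hypothesis is indispensable. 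That auxiliary lemma is never stated or proved, and the proposed route is doubtful, since Proposition \ref{p^e-1} and Corollary \ref{0} only relate $f(cp^e-1)$ to $f(c-1)$ for a single digit $c$ and give no handle on $f(up^a-1)$ for multi-digit $u$. The diagnosis is also misplaced: the proposition assumes only $\nu_p(f(c-1))\le1$, not that $p$ is good; goodness enters the paper only afterwards, to guarantee that this hypothesis holds for every $c$.

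The missing step is elementary and is the one the paper uses. Write $i+1=up^a$ with $p\nmid u$. By Kummer, $\nu_p\binom ij$ is the number of carries in a base-$p$ addition with sum $i$; since the bottom $a$ digits of $i=up^a-1$ all equal $p-1$, no carry can leave positions $0,\dots,a-1$, and no carry can leave position $e-1$ because $i<p^e$. Hence $\nu_p\binom ij\le e-1-a$ for all $j$, so $\nu_p(f(i))\ge a-e+1$. Since $a\le e-1$ this is $\ge 2(a-e+1)$, which is exactly the bound you said you needed---your worry is backwards, as $2(a-e+1)$ is \emph{more} negative than $a-e+1$ and hence a weaker demand, and the ``large $a$'' case is precisely where the trailing-digit refinement does the work. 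Feeding $\nu_p(f(i))\ge a-e+1$ into your inequality gives left side at least $e-2a+(a-e+1)=1-a$, which exceeds $-e+v$ for $a\ge1$ because $a\le e-1$ and $v\le1$; the case $a=0$ you already handled. With this the induction closes with no appeal to goodness and no unproved auxiliary lemma, and your concluding remark about (\ref{pdiff}) then stands.
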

\begin{proof} Let $e$ and $c$  be fixed, and let $\Delta(i):=f(cp^e+i)-f(i)$. The proof is by induction on $i$, using (\ref{rec1}). From (\ref{rec1}) we obtain
$$f(cp^e)-1=\frac{cp^e+1}{2cp^e}f(cp^e-1),$$
which, with Corollary \ref{0}, yields the claim for $i=0$. From (\ref{rec1}), we also deduce
\begin{equation}\label{ind}\Delta(i)=\frac{cp^e+i+1}{2(cp^e+i)}\Delta(i-1)-\frac{cp^e}{2i(cp^e+i)}f(i-1).\end{equation}
Assume the result for $i-1$. Then the first term in the RHS of (\ref{ind}) has exponent
$\nu_p(i+1)-\nu_p(i)+(-e+\nu_p(i)+\nu_p(f(c-1))$, which is the desired value. The exponent of the second term in the RHS of (\ref{ind}) is $$e-2\nu_p(i)+\nu_p(f(i-1))\ge e-2\nu_p(i)-\max\limits_j(\nu_p\tbinom{i-1}j).$$
We will know that the second term has larger exponent than the first once we
have shown  that, for $1\le i<p^e-1$ and $j\le i-1$,
$$2e-1>2\nu_p(i)+\nu_p(i+1)+\nu_p\tbinom{i-1}j .$$
This follows easily from the fact that $\nu_p\binom{i-1}j\le e-1-\nu_p(i)$, since it equals the number of carries in a base-$p$
addition whose sum is $i-1$.
\end{proof}

We will prove the following result in Section \ref{23sec}. The implication of this is that information about the case $e=1$ yields similar information for all $e$.
\begin{lem}\label{eind} If $1\le c,u\le p-1$ and $\nu_p(f(c-1))=2$,
then for all $e\ge 1$,
the expression $f(cp^e+up^{e-1}-1)-f(up^{e-1}-1)$ is divisible by $p$
and its residue  mod $p^2$ is independent of $e$.\end{lem}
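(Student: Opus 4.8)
The plan is to reduce the assertion for general $e$ to the case $e=1$ by induction on $e$. Write $b=cp+u$, so that $cp^e+up^{e-1}-1=bp^{e-1}-1$, and set
$$g_e:=f(bp^{e-1}-1)-f(up^{e-1}-1),$$
so $g_1=f(cp+u-1)-f(u-1)$ is the $e=1$ datum. The two claims (that $p\mid g_e$ and that $g_e\bmod p^2$ is independent of $e$) both follow once I show $p\mid g_1$ and $g_e\equiv g_{e-1}\pmod{p^2}$ for all $e\ge2$.

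The engine I would use is a version of Proposition \ref{p^e-1} valid for an \emph{arbitrary} first argument $m$ coprime to $p$, not merely $1\le m\le p-1$: for $s\ge1$,
$$f(mp^s-1)-f(mp^{s-1}-1)\equiv m(1-2^{p-1})p^{s-1}f(mp^{s-1}-1)\pmod{p^{s+1}}.\qquad(\star)$$
I would obtain $(\star)$ by re-running the proof of Proposition \ref{p^e-1}: the part summing $\binom{mp^s-1}{k}^{-1}$ over $k\not\equiv0\pmod p$ uses only $p\nmid m$ (the Lucas-type factorization of $\binom{mp^s-1}{ip+2j}$ together with Lemma \ref{1-2}), and the part summing over $k\equiv0\pmod p$ uses Lemma \ref{newlem} with $b=m$. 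I expect the verification of $(\star)$ for the two-digit argument $m=b=cp+u$ to be the \textbf{main obstacle}; in particular the case $u=1$ (where $b\equiv1\pmod p$) will require the same separate treatment as the $c=1$ case of Proposition \ref{p^e-1}, since Lemma \ref{newlem} is stated only for units digit greater than $1$.

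Granting $(\star)$, I apply it with $s=e-1$ to $m=b$ and to $m=u$ and subtract, obtaining
$$g_e-g_{e-1}\equiv(1-2^{p-1})p^{e-2}\bigl(b\,f(bp^{e-2}-1)-u\,f(up^{e-2}-1)\bigr)\pmod{p^e}.$$
For $e\ge3$ I would bound the right side directly: by Corollary \ref{0}, using $\nu_p(f(u-1))\le2$ (which holds for good primes and for $p=23$, the setting of this section), one has $\nu_p(f(up^{e-2}-1))=\nu_p(f(u-1))\ge0$, and $f(bp^{e-2}-1)=f(up^{e-2}-1)+g_{e-1}$ also lies in $\Z_p$ by the inductive hypothesis $p\mid g_{e-1}$. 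Hence each product is a $p$-adic integer, the factor $(1-2^{p-1})p^{e-2}$ contributes $\nu_p\ge e-1\ge2$, and $g_e\equiv g_{e-1}\pmod{p^2}$. For $e=2$ that factor only contributes $\nu_p\ge1$, so I would gain one more power by hand: using $f(b-1)=f(u-1)+g_1$ and $b-u=cp$,
$$b\,f(b-1)-u\,f(u-1)=cp\,f(u-1)+b\,g_1,$$
which has $\nu_p\ge1$ because $p\mid g_1$; combined with $\nu_p(1-2^{p-1})\ge1$ this again gives $g_2\equiv g_1\pmod{p^2}$.

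It remains to prove the base case $p\mid g_1$. Here I note that $g_1=\Delta(u-1)$ in the notation of Proposition \ref{fce}, where $\Delta(i)=f(cp+i)-f(i)$ is the $e=1$ instance, and I run the recursion (\ref{ind}) with $e=1$. The start is $\Delta(0)=f(cp)-1=\tfrac{cp+1}{2cp}f(cp-1)$; since $\nu_p(f(cp-1))=\nu_p(f(c-1))=2$ by Corollary \ref{0}, this gives $\nu_p(\Delta(0))=-1+2=1$. For $1\le i\le u-1$ each of $cp+i+1$, $cp+i$, and $i$ is a $p$-adic unit, so the first term of (\ref{ind}) preserves the $p$-exponent, while the second term $\tfrac{cp}{2i(cp+i)}f(i-1)$ has $\nu_p\ge1$ because $f(i-1)\in\Z_p$ for $i-1<p$. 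Induction on $i$ then yields $\nu_p(\Delta(i))\ge1$, so $p\mid g_1$. Together with $g_e\equiv g_{e-1}\pmod{p^2}$ for $e\ge2$, this shows $p\mid g_e$ and $g_e\equiv g_1\pmod{p^2}$ for all $e$, which is the assertion.
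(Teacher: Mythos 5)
There is a genuine gap, and it sits exactly where you flagged ``the main obstacle'': the congruence $(\star)$ for a two-digit first argument $m=cp+u$ is false, not merely delicate. Take $p=3$, $m=4$, $s=1$: then $f(11)-f(3)=-\tfrac{164}{385}\equiv 1\pmod 9$, whereas $m(1-2^{p-1})p^{s-1}f(mp^{s-1}-1)=-12\cdot\tfrac83=-32\equiv 4\pmod 9$. The reason re-running the proof of Proposition \ref{p^e-1} does not work is that both of its ingredients require $\binom{mp^{s-1}-1}{i}$ to be a $p$-adic unit for every $i$ in range: inverting the Lucas congruence $\binom{mp^{s}-1}{ip+2j}\equiv\binom{mp^{s-1}-1}{i}\binom{p-1}{2j}\pmod p$ is only legitimate for unit values, and Lemma \ref{newlem} is stated (via the hypothesis $j<up^{e-1}$) and proved only on the range where $\binom{bp^{e-1}-1}{j}$ is a unit. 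When $m=cp+u$ the base-$p$ digits of $mp^{s-1}-1$ are $c,\,u-1,\,p-1,\ldots,p-1$, so $\binom{mp^{s-1}-1}{i}$ is divisible by $p$ for many $i$ with $up^{s-1}\le i<mp^{s-1}$; this happens for \emph{every} $u$, not just $u=1$. (In the example above, $\binom{11}{3}^{-1}-\binom{3}{1}^{-1}=-\tfrac{54}{165}$ has $\nu_3=2$, not $\ge 3$ as Lemma \ref{newlem} would require.)

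That high range is precisely where the paper's proof of Lemma \ref{eind} spends its effort: it splits $D_e=L_e+H_e$ with $H_e$ the sum over $i\ge up^{e-1}$, disposes of $L_{e+1}-L_e$ by essentially the argument you propose (Lemma \ref{newlem} plus the Lucas factorization), and then controls $H_{e+1}-H_e$ by the computations leading to (\ref{f1}) and (\ref{f2}), which invoke Lemma \ref{ABCD}, Lemma \ref{c-1}, and, crucially, the hypothesis $\nu_p(f(c-1))=2$ through $f(c)-1\equiv0\pmod{p^2}$ and $\sum i\tbinom{c-1}{i}^{-1}\equiv0\pmod p$. Your derivation of $g_e\equiv g_{e-1}\pmod{p^2}$ uses that hypothesis only to keep $f$-values $p$-integral, which is itself a warning sign: the conclusion genuinely depends on $\nu_p(f(c-1))=2$ (by Proposition \ref{fce}, if $\nu_p(f(c-1))=0$ then $\nu_p(D_e)=-1$ and $D_e$ is not even divisible by $p$), so a correct proof must use it more substantially. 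Your telescoping framework ($p\mid g_1$ together with $g_e\equiv g_{e-1}\pmod{p^2}$) and your base case do match the paper's skeleton; what is missing is the control of $D_{e+1}-D_e$ on the high range, and that is the heart of the proof.
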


Now we can prove the following analogue of Proposition \ref{fce}.
\begin{prop}\label{nu=2} Suppose $\nu_p(f(c-1))=2$ with $1\le c\le p-1$, and for all $u$ which satisfy $1\le u\le p-1$ and $\nu_p(f(u-1))=0$ we have
$\nu_p(f(cp+u-1)-f(u-1))=1$ and
\begin{equation}\label{23cond}\tfrac1p(f(cp+u-1)-f(u-1))\not\equiv\tfrac c{u}f(u-1)\mod p.\end{equation}
Then, for all $e\ge3$ and all $i$ satisfying $0\le i<p^e-1$, (\ref{3.2}) holds.\end{prop}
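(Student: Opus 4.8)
The plan is to prove (\ref{3.2}) for a fixed $e\ge3$ by induction on $i$, exactly as in Proposition \ref{fce}: write $\Delta(i)=f(cp^e+i)-f(i)$ and drive the induction by (\ref{ind}). The base case $i=0$ is immediate, since Corollary \ref{0} gives $\nu_p(f(cp^e-1))=\nu_p(f(c-1))=2$, whence $f(cp^e)-1=\frac{cp^e+1}{2cp^e}f(cp^e-1)$ has exponent $2-e$, the asserted value. For the inductive step I would reuse the two exponent estimates from the proof of \ref{fce}: the first term of (\ref{ind}) has exponent $-e+\nu_p(i+1)+2$ (the desired value, once the formula is known for $i-1$), while the second has exponent at least $e-2\nu_p(i)-\max_j\nu_p\binom{i-1}j\ge 1-\nu_p(i)$. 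The place where this argument breaks for $\nu_p(f(c-1))=2$ (it did not for $\nu_p(f(c-1))\le1$) is that strict domination of the first term now requires $e>\nu_p(i)+\nu_p(i+1)+1$, which fails precisely when $\max(\nu_p(i),\nu_p(i+1))=e-1$.

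Thus the only indices needing separate treatment are the two families $i=up^{e-1}-1$ and $i=up^{e-1}$ with $1\le u\le p-1$, where both terms of (\ref{ind}) have exponent $2-e$ and might cancel. I would handle $i=up^{e-1}-1$ first. Here $\nu_p(i+1)=e-1$ and the asserted exponent is $1$. Rather than use (\ref{ind}), which would force control of the unknown $\nu_p(f(up^{e-1}-2))$, I would invoke Lemma \ref{eind}, which identifies $\Delta(up^{e-1}-1)=f(cp^e+up^{e-1}-1)-f(up^{e-1}-1)$ modulo $p^2$ with its $e=1$ value $f(cp+u-1)-f(u-1)$ and asserts divisibility by $p$. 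The hypothesis $\nu_p(f(cp+u-1)-f(u-1))=1$ then gives $\nu_p(\Delta(up^{e-1}-1))=1$, matching the formula, and simultaneously records the leading coefficient $r_u\equiv\frac1p(f(cp+u-1)-f(u-1))\mod p$.

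For $i=up^{e-1}$, the quantity $\Delta(i-1)=\Delta(up^{e-1}-1)$ is now known, so I would feed it into (\ref{ind}) and extract the leading coefficient of each term modulo $p$. The coefficient of the first term is $\frac{cp^e+up^{e-1}+1}{2(cp^e+up^{e-1})}$, of exponent $-(e-1)$ with leading part $\frac1{2u}$, so the first term contributes $\frac{r_u}{2u}$ at exponent $2-e$; the second term, $\frac{cp^e}{2up^{e-1}(cp^e+up^{e-1})}f(up^{e-1}-1)$, has exponent $2-e$ when $\nu_p(f(u-1))=0$ and subtracts $\frac{c}{2u^2}f(u-1)$, using $f(up^{e-1}-1)\equiv f(u-1)\mod p$ from Proposition \ref{p^e-1}. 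These two contributions fail to cancel modulo $p$ iff $r_u\not\equiv\frac cu f(u-1)\mod p$, which is exactly hypothesis (\ref{23cond}); hence $\nu_p(\Delta(up^{e-1}))=2-e$, as required.

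This non-cancellation at the indices $i=up^{e-1}$ is the heart of the matter and the step I expect to be the main obstacle, since it is the only place where exponent bookkeeping alone is insufficient and one must carry residues: Lemma \ref{eind} and condition (\ref{23cond}) are precisely the inputs that make it go through. For the $u$ with $\nu_p(f(u-1))>0$ the second term above acquires extra exponent and is dominated by the first, so no cancellation can occur and only the type-$(up^{e-1}-1)$ valuation—again supplied by Lemma \ref{eind}—is needed. Finally, the hypothesis $e\ge3$ provides the separation of exponents that keeps all of these residue extractions (each valid only modulo the next power of $p$) legitimate, and the established exact exponents at the critical indices feed back correctly into the generic step for the following index.
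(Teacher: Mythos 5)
Your proposal is correct and follows essentially the same route as the paper: induction on $i$ via (\ref{ind}) with the generic step as in Proposition \ref{fce}, isolating the critical indices $i=up^{e-1}-1$ (settled by Lemma \ref{eind} plus the hypothesis $\nu_p(f(cp+u-1)-f(u-1))=1$) and $i=up^{e-1}$ (settled by the residue computation whose non-vanishing is exactly (\ref{23cond})). The leading-coefficient extraction you perform at $i=up^{e-1}$ matches the paper's congruence $2up^{e-2}\Delta(up^{e-1})\equiv\tfrac1p(f(cp+u-1)-f(u-1))-\tfrac cu f(u-1)\bmod p$.
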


Suppose $p$ is a prime which is not good but has $\nu_p(f(c-1))\le2$ for all $1\le c\le p-1$ and whenever $\nu_p(f(c-1))=2$ the hypotheses of
Proposition \ref{nu=2} are satisfied. Then (\ref{3.2}) holds for $e\ge3$ and hence so does (\ref{pdiff}).  {\tt Maple} easily verifies that the conditions
of Proposition \ref{nu=2} are satisfied when $p=23$ and $c=13$. Hence (\ref{pdiff}), and thus also Theorem \ref{onlyodd}, holds when $p=23$.
Note that the proof of Theorem \ref{onlyodd} only cares about large values of $e$ in (\ref{pdiff}), and so our restriction to $e\ge3$ is not a problem.

\begin{proof}[Proof of Proposition \ref{nu=2}] As in the proof of Proposition \ref{fce}, fix $c$ and $e$, and let $\Delta(i)=f(cp^e+i)-f(i)$.
If we assume (\ref{3.2}) holds for $i-1$, the exponent of the first term of (\ref{ind}) equals the desired value of $\nu_p(\Delta(i))$. Now that $\nu(f(c-1))=2$,
it can happen that the second term has the same exponent if $i=up^{e-1}-1$ or $up^{e-1}$, in which case it could conceivably happen that the exponent
of the combination of the two terms is  larger than that of the individual terms.

In the next paragraph, we will use the hypothesis to prove directly that
$\nu_p(\Delta(up^{e-1}-1))$ and $\nu_p(\Delta(up^{e-1}))$ have the desired values when $1\le u\le p-1$. The validity for $\Delta(0)$ is verified
using (\ref{rec1}) as in the proof of \ref{fce}. For $0\le u\le p-1$, the induction from $up^{e-1}$ through $(u+1)p^{e-1}-2$ works just as it did
in the proof of \ref{fce}. Thus the result is valid for all $i<p^e-1$, as claimed.

That $\nu_p(\Delta(up^{e-1}-1))$ has the desired value 1 is immediate from Lemma \ref{eind} and the hypothesis that $\nu_p(\Delta(u-1))=1$. Mod $p$, (\ref{ind}) gives
\begin{eqnarray*}2up^{e-2}\Delta(up^{e-1})&\equiv&\tfrac1p\Delta(up^{e-1}-1)-\tfrac cu f(up^{e-1}-1)\\
&\equiv&\tfrac1p(f(cp+u-1)-f(u-1))-\tfrac cuf(u-1),\end{eqnarray*}
using Lemma \ref{eind} and Proposition \ref{p^e-1} at the last step. By Proposition \ref{nu=2}, the last expression has $\nu_p(-)=0$,
and so $\Delta(up^{e-1})$ has the claimed value of $-(e-2)$.
\end{proof}

\section{Proof of Theorem \ref{yes}}\label{2sec}
In this section, we prove Theorem \ref{yes}, which is a major difference between the situation when $p=2$ and the odd primes.
Our proof of Theorem \ref{yes} will use the following proposition.
\begin{prop}\label{sumrec} For $n\ge1$, $\ds \nu_2\bigl(\sum_{j=1}^n \tfrac1{2j-1}\bigr)=2\nu_2(n)$.\end{prop}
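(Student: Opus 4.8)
The plan is to prove this by induction on $\nu_2(n)$, driven by a pairing of the summands together with a $2$-adic geometric-series expansion. Throughout I write $P_s(n)=\sum_{j=1}^n(2j-1)^{-s}$, so the quantity of interest is $P_1(n)$. Since each $(2j-1)^{-s}$ is a $2$-adic unit, every $P_s(n)$ lies in $\Z_2$ and satisfies $P_s(n)\equiv n\pmod2$; in particular, when $n$ is odd, $\nu_2(P_1(n))=0=2\nu_2(n)$, which handles the base case of the induction.

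For the doubling step I would first record the pairing identity. Grouping the $j$th and $(2n+1-j)$th terms of $P_1(2n)$ and using $\tfrac1{2j-1}+\tfrac1{4n-2j+1}=\tfrac{4n}{(2j-1)(4n-2j+1)}$ gives
$$P_1(2n)=4n\sum_{j=1}^n\frac1{(2j-1)(4n-2j+1)}.$$
Writing $4n-2j+1=-(2j-1)\bigl(1-\tfrac{4n}{2j-1}\bigr)$ and expanding the second factor as a $2$-adic geometric series, which converges because $\nu_2(4n/(2j-1))\ge2$, turns the inner sum into $-\sum_{t\ge0}(4n)^tP_{t+2}(n)$. Hence $\nu_2(P_1(2n))=2+\nu_2(n)+\nu_2\bigl(\sum_{t\ge0}(4n)^tP_{t+2}(n)\bigr)$, and since the $t\ge1$ terms have valuation at least $2+\nu_2(n)$ while each $P_{t+2}(n)\in\Z_2$, the whole computation reduces to understanding $\nu_2(P_2(n))$, provided $\nu_2(P_2(n))<2+\nu_2(n)$.

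The crux is therefore the auxiliary claim that $\nu_2(P_2(n))=\nu_2(n)$, and I expect this to be the main point. The reason it is tractable is that even exponents behave differently from odd ones: applying the same pairing to $P_2(2n)$ and expanding $(1-4n/(2j-1))^{-2}$ yields $P_2(2n)=2P_2(n)+\sum_{t\ge1}(t+1)(4n)^tP_{t+2}(n)$, where the essential feature is the leading coefficient $2$ (rather than $4n$) arising from $(-(2j-1))^2=(2j-1)^2$. Because the correction terms again have valuation at least $2+\nu_2(n)$ and all $P_m(n)\in\Z_2$, an induction on $\nu_2(n)$ with the odd base case gives $\nu_2(P_2(2n))=1+\nu_2(n)$, that is, $\nu_2(P_2(n))=\nu_2(n)$ for every $n\ge1$. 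Feeding this back into the previous paragraph yields $\nu_2(P_1(2n))=2+2\nu_2(n)=2\nu_2(2n)$, which together with the odd case proves $\nu_2(P_1(n))=2\nu_2(n)$ for all $n\ge1$. The one place demanding care is verifying that the geometric-series remainders genuinely have strictly larger valuation than the leading term in each expansion, so that no cancellation can lower the valuation below the predicted value.
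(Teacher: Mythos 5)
Your proof is correct, and it takes a genuinely different route from the paper's. The paper works with elementary symmetric polynomials of consecutive odd integers: it rewrites $\sum_{j=m+1}^n\tfrac1{2j-1}$ as $\sigma_{n-m-1}(2m+1,\ldots,2n-1)$ divided by an odd number, proves via the polynomial $\prod_j(x^2-(2j-1)^2)$ that $\sigma_{2^e-1}$ of the symmetric list $\pm1,\pm3,\ldots,\pm(2^e-1)$ vanishes while $\sigma_{2^e-2}$ has valuation $e-1$, and then runs a two-stage induction --- first the case $n=2^e$ by expanding the symmetric function about the center $2^e$, then a step from $n=2^e(2a-1)$ to $n=2^e(2a+1)$ via a separate block-sum estimate (\ref{bignu}). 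You replace all of this with power sums $P_s$, a pairing, and geometric-series expansions; I checked the valuation bookkeeping and it is sound (the $t\ge1$ remainders have valuation at least $2+\nu_2(n)$, strictly above $\nu_2(P_2(n))=\nu_2(n)$ in the $P_1$-identity and strictly above $1+\nu_2(n)$ in the $P_2$-recursion, so no cancellation can interfere). The common kernel of the two proofs is the sum of inverse squares of odd numbers: the paper's coefficient-of-$x^2$ computation $\nu_2\bigl(1+\tfrac19+\cdots+\tfrac1{(2^e-1)^2}\bigr)=e-1$ is exactly your auxiliary claim $\nu_2(P_2(n))=\nu_2(n)$ at $n=2^{e-1}$, and its halving trick $1/j^2\equiv1/(2^e-j)^2$ plays the role of your leading coefficient $2$ coming from $(-(2j-1))^2=(2j-1)^2$. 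Your version buys a single uniform doubling induction (indeed, once $\nu_2(P_2(n))=\nu_2(n)$ is known, the identity $\nu_2(P_1(2n))=2+\nu_2(n)+\nu_2(P_2(n))$ settles every even argument outright, with no separate passage from $2^e(2a-1)$ to $2^e(2a+1)$); the paper's formulation has the advantage that its symmetric-function lemmas are reused directly in the proof of Proposition \ref{2^e-1}.
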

\begin{proof} The result when $n$ is odd follows from the result for $n-1$, since we are adding a number with $\nu_2=0$ to one with $\nu_2>0$.
We will use that
\begin{equation}\nu_2\bigl(\sum_{j=m+1}^n\tfrac1{2j-1}\bigr)=\nu_2(\sigma_{n-m-1}(2m+1,2m+3,\ldots,2n-1)),\label{sig}\end{equation}
where $\sigma(-)$ denotes an elementary symmetric polynomial, and its arguments are consecutive odd integers.
The following lemma about these will be useful. We will prove it after completing the proof of the proposition.
\begin{lem}\label{siglem} For $e\ge1$,
\begin{equation}\label{sig1}\sigma_{2^e-1}(-(2^e-1),-(2^e-3),\ldots,-1,1,\ldots,2^e-3,2^e-1)=0,\end{equation}
while for $e\ge2$,
\begin{equation}\label{sig2}\nu_2(\sigma_{2^e-2}(-(2^e-1),-(2^e-3),\ldots,-1,1,\ldots,2^e-3,2^e-1))=e-1.\end{equation}
\end{lem}

We first prove the proposition when $n=2^e$. Note that, mod $2^{2e+1}$,
\begin{eqnarray*}&&\sigma_{2^e-1}(2^e-(2^e-1),\ldots,2^e-1,2^e+1,\ldots,2^e+(2^e-1))\\
&\equiv&\sigma_{2^e-1}(-(2^e-1),\ldots,-1,1,\ldots,2^e-1)\\
&&+2\cdot2^e\sigma_{2^e-2}(-(2^e-1),\ldots,-1,1,\ldots,2^e-1)\\
&&+3\cdot2^{2e}\sigma_{2^e-3}(-(2^e-1),\ldots,-1,1,\ldots,2^e-1).\end{eqnarray*}
The factors of 2 and 3 occur since when $k$ factors are omitted, there are $k$ ways that the
first omission could have been chosen. The third term is 0 mod $2^{2e+1}$ since $\sigma_{2^e-3}(-(2^e-1),\ldots,-1,1,\ldots,2^e-1)$
is the sum of $\binom{2^e}{2^e-3}$ odd numbers, and $\binom{2^e}{2^e-3}$ is even.
By Lemma \ref{siglem}, the first of the three terms is 0 and the second has $\nu_2=2^{2e}$, implying the proposition when $n=2^e$.

We complete the proof by showing that validity for $n=2^e(2a-1)$ implies validity for $n=2^e(2a+1)$. This will be done
by showing
\begin{equation}\label{bignu}\nu_2\bigl(\frac1{2^{e+2}a-(2^{e+1}-1)}+\cdots+\frac1{2^{e+2}a+(2^{e+1}-1)}\bigr)>2e.\end{equation}
This is a sum of reciprocals of consecutive odd integers. The LHS is $\nu_2(\sigma)$, where, mod $2^{2e+4}$,
\begin{eqnarray*}\sigma&=&\sigma_{2^{e+1}-1}(2^{e+2}a-(2^{e+1}-1),\ldots,2^{e+2}a+ (2^{e+1}-1))\\
&\equiv&2\cdot2^{e+2}a\sigma_{2^{e+1}-2}(-(2^{e+1}-1),\ldots,-1,1,\ldots,2^{e+1}-1),\end{eqnarray*}
arguing similarly to the previous paragraph. By (\ref{sig2}), $\nu_2(\sigma)\ge e+3+e$.\end{proof}

\begin{proof}[Proof of Lemma \ref{siglem}] These are the coefficients of $x$ and $x^2$ in $\ds\prod_{j=1}^{2^{e-1}}(x^2-(2j-1)^2)$. Thus (\ref{sig1}) is clear,
and the LHS of (\ref{sig2}) equals $\nu_2(\sigma_{2^{e-1}-1}(1^2,3^2,\ldots,(2^e-1)^2)).$
We prove by induction on $e$ that this equals $e-1$. It is easily checked when $e=2$. Since all arguments are odd, we need
$$\nu_2\bigl(1+\frac1{3^2}+\cdots+\frac1{(2^e-1)^2}\bigr)=e-1.$$
Mod $2^e$, $1/j^2\equiv1/(2^e-j)^2$. Thus we need
$$\nu_2\bigl(1+\frac1{3^2}+\cdots+\frac1{(2^{e-1}-1)^2}\bigr)=e-2,$$
and this is the induction hypothesis.\end{proof}

Our next step toward the proof of Theorem \ref{yes} is
\begin{prop}\label{2^e-1} If $e\ge3$, then $\nu_2(f(2^e-1))\ge 2e$.\end{prop}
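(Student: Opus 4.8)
The plan is to pull the factor $2^e$ out of $f(2^e-1)$ by pairing adjacent summands, and then to show the surviving sum is itself divisible by $2^e$. Since $2^e-1$ has all binary digits equal to $1$, every $\binom{2^e-1}{k}$ is odd, and the identity $\binom Nk^{-1}+\binom N{k+1}^{-1}=\frac{N+1}{k+1}\binom N{k+1}^{-1}$ already used in the proof of Proposition \ref{p^e-1} (with $N=2^e-1$) gives
\begin{equation*}
f(2^e-1)=2^e\sum_{j=1}^{2^{e-1}}\frac1{(2j-1)\binom{2^e-1}{2j-1}}=:2^eS .
\end{equation*}
A factorial rewrite even identifies $S=\tfrac1{2^e-1}\sum_{i=0}^{2^{e-1}-1}\binom{2^e-2}{2i}^{-1}$, but in either form the task reduces to proving $\nu_2(S)\ge e$. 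The summands of $S$ are $2$-adic units, so this is a genuine cancellation statement and cannot be read off termwise.

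To expose the cancellation I would expand the binomial inverse. Because $2^e-i\equiv-i\pmod{2^e}$, one has the exact factorization $\binom{2^e-1}{2j-1}=-\prod_{i=1}^{2j-1}\bigl(1-\tfrac{2^e}i\bigr)$, hence
\begin{equation*}
\frac1{\binom{2^e-1}{2j-1}}=-\prod_{i=1}^{2j-1}\Bigl(1-\frac{2^e}i\Bigr)^{-1}=-\Bigl(1+2^eH_{2j-1}+\cdots\Bigr),
\end{equation*}
where $H_m=\sum_{i=1}^m 1/i$ and the omitted terms form a series in $2^e$ whose coefficients are power sums $\sum 1/i^r$. Substituting, the leading part of $S$ is $-\sum_{j=1}^{2^{e-1}}1/(2j-1)$, and by Proposition \ref{sumrec} this has $\nu_2=2\nu_2(2^{e-1})=2(e-1)\ge e$ for $e\ge2$. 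Thus the leading term already meets the bound, and the proposition follows once every correction term is shown to have $\nu_2\ge e$ as well.

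The main obstacle is precisely this control of the corrections, since the power sums $\sum_{i\le 2^e-1}1/i^r$ have negative $2$-adic valuation (the term $i=2^{e-1}$ alone contributes $-r(e-1)$), so no individual correction is visibly $2^e$-small. The resolution I anticipate is the mechanism that drives Proposition \ref{sumrec} itself: after clearing denominators these corrections are governed by elementary symmetric functions of consecutive (odd) integers, and Lemma \ref{siglem} supplies the compensating powers of $2$, so that each $2^{er}$-weighted correction lands at valuation $\ge e$. It is worth noting that the true valuation is $2e+2$ for $e\ge3$, comfortably above the claimed $2e$; only the crude bound $\nu_2(S)\ge e$ need be extracted, which leaves the bookkeeping for the corrections far more forgiving than a sharp computation would be.
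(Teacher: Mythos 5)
Your reduction $f(2^e-1)=2^e\sum_{j=1}^{2^{e-1}}\tfrac1{(2j-1)\binom{2^e-1}{2j-1}}=2^eS$ is correct, and your zeroth-order term is handled correctly by Proposition \ref{sumrec}. But the proof has a genuine gap exactly where you flag it: the correction terms. Expanding $\binom{2^e-1}{2j-1}^{-1}=-\prod_{i=1}^{2j-1}(1-2^e/i)^{-1}$ produces, at order $r$, the coefficient $h_r(1,\tfrac12,\ldots,\tfrac1{2j-1})$ (a complete homogeneous symmetric function), and since the arguments include $\tfrac1{2^{e-1}}$ once $2j-1>2^{e-1}$, this coefficient can have $\nu_2$ as low as $-r(e-1)$; the weighted term $2^{er}h_r$ then has termwise valuation only $\ge r$. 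So for every order $1\le r\le e-1$ you need a separate cancellation argument across $j$, and you supply none. Lemma \ref{siglem} cannot serve as the missing ingredient: it evaluates only the top two elementary symmetric functions of the consecutive \emph{odd} integers $\pm1,\ldots,\pm(2^e-1)$, whereas your corrections involve all denominators $1,\ldots,2j-1$, even ones included, summed against the outer weight $\tfrac1{2j-1}$. Nothing in the paper's toolkit gives the all-orders estimate your plan requires.

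The paper sidesteps this difficulty with a device your decomposition lacks: it pairs each block of $f(2^e-1)$ against the corresponding terms of $f(2^{e-1}-1)$ with alternating signs (the quantities $p_{e,j}$), which amounts to dividing out the even factors $2^e-2,2^e-4,\ldots$ from the denominator. What remains is $\prod_{i\ \mathrm{odd}}(1-2^e/i)^{-1}=1+2^e(1+\tfrac13+\cdots+\tfrac1{2j-1})+2^{2e}A$ with $A\in\Z_2$, because now \emph{all} denominators are odd; the entire tail of the expansion is absorbed into a single harmless $2^{2e}A$, and only the first-order odd harmonic sum needs Proposition \ref{sumrec}. A telescoping sum then recovers $\tfrac12 f(2^e-1)$. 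If you want to rescue your approach, you would need either to prove the order-by-order cancellation directly (which looks at least as hard as the proposition itself) or to build the comparison with $f(2^{e-1}-1)$ into your pairing, at which point you have reconstructed the paper's argument.
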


In fact, we conjecture that $\nu_2(f(2^e-1))=3e-2$ for $e\ge4$, but this seems a good bit harder to prove, and not much more useful
in proving something like Theorem \ref{yes}.

\begin{proof}[Proof of Proposition \ref{2^e-1}]
For $1\le j\le 2^{e-2}-1$, let
\begin{eqnarray*}p_{e,j}&:=&\tbinom{2^e-1}{2j}^{-1}+\tbinom{2^e-1}{2j-1}^{-1}+(-1)^{j+1}\bigl(\tbinom{2^{e-1}-1}j^{-1}+\tbinom{2^{e-1}-1}{j-1}^{-1}\bigr)\\
&=&\frac{(2j-1)!}{(2^e-1)\cdots(2^e-(2j-1))}\cdot\frac{2^e}{2^e-2j}\\
&&\qquad+(-1)^{j+1}\frac{(j-1)!}{(2^{e-1}-1)\cdots(2^{e-1}-(j-1))}\cdot\frac{2^{e-1}}{2^{e-1}-j}\\
&=&\frac{2^{e-1}}{2^{e-1}-j}\frac1{\binom{2^{e-1}-1}{j-1}}\bigl(\frac{(-1)^j}{(1-2^e)(1-\frac132^e)\cdots(1-\frac1{2j-1}2^e)}+(-1)^{j+1}\bigr)\\
&=&u\cdot2^{e-1-\nu_2(j)}\bigl(1+2^e(1+\tfrac13+\cdots+\tfrac1{2j-1})+2^{2e}A-1\bigr)\\
&=&u\cdot2^{2e-1-\nu_2(j)}(1+\tfrac13+\cdots+\tfrac1{2j-1}+2^{e}A).\end{eqnarray*}
Here $\nu_2(u)=0$, and $A\in\zt$ since it is a combination of elementary symmetric polynomials whose arguments are fractions with odd denominators.
The dots in the second (double) line range over all integers in the range, while in subsequent lines the dots range over odd integers in the range.
In going from the second line to the third, we have noted that the even factors in the first fraction, after dividing by 2, give the factors of the second fraction.
In going from the third line to the fourth, we have used that $\binom{2^{e-1}-1}{j-1}$ is odd.
Using Proposition \ref{sumrec} and that $e>\nu_2(j)$, we obtain $\nu_2(p_{e,j})\ge2e-1$.

Similarly we have
\begin{eqnarray*} p_{e,\text{mid}}&:=&\tbinom{2^e-1}{2^{e-1}-1}^{-1}-\tbinom{2^{e-1}-1}{2^{e-2}-1}^{-1}\\
&=&\frac1{\binom{2^{e-1}-1}{2^{e-2}-1}}\bigl(\frac1{(1-2^e)(1-\frac132^e)\cdots(1-\frac1{2^{e-1}-1}2^e)}-1\bigr)\\
&=&u'(2^e(1+\tfrac13+\cdots+\tfrac1{2^{e-1}-1})+2^{2e}A'),\end{eqnarray*}
satisfying $\nu_2(p_{e,\text{mid}})\ge 2e-1$.
Thus
$$\nu_2\bigl(\sum_{j=1}^{2^{e-2}-1}p_{e,j}+p_{e,\text{mid}}\bigr)\ge2e-1.$$

On the other hand,
$$\sum_{j=1}^{2^{e-2}-1}p_{e,j}=\sum_{i=1}^{2^{e-2}-2}\tbinom{2^e-1}i^{-1}+\tbinom{2^{e-1}-1}{2^{e-2}-1}^{-1}+\tbinom{2^{e-1}-1}0^{-1},$$
since most of the terms with alternating signs cancel.
Hence
$$\sum_{j=1}^{2^{e-2}-1}p_{e,j}+p_{e,\text{mid}}=\sum_{i=0}^{2^{e-1}-1}\tbinom{2^e-1}i^{-1},$$
which is $\frac12f(2^e-1)$. Thus $\nu_2(f(2^e-1))\ge2e$, as claimed.
\end{proof}

\begin{proof}[Proof of Theorem \ref{yes}] We will use Proposition \ref{2^e-1} and (\ref{rec1}) to prove for $i\ge0$ and $2^e>i$,
\begin{equation}\label{2^e+i} \nu_2(f(2^e+i)-f(i))\ge e-i-1.\end{equation}
Then, with $x$ as in Theorem \ref{yes}, let $E_k=\ds\sum_{i=0}^k2^{e_i}$. The hypothesis of the theorem and (\ref{2^e+i}) imply that for $k\ge N$,
$\nu_2(f(E_k)-f(E_{k-1}))\ge k$, and so $d(f(E_k),f(E_{k-1}))\le 2^{-k}$. Thus $\la f(E_k)\ra$ is a Cauchy sequence and so has a limit in $\Q_2$.
Thus $\fbar(x)$ is definable.

Now we prove (\ref{2^e+i}). Let $e$ be fixed, and $\Delta(i)=f(2^e+i)-f(i)$. Using Proposition \ref{2^e-1}, let $f(2^e-1)=A\cdot2^{2e}$ with $A\in\zt$. Using (\ref{rec1}) we obtain $\Delta(0)=A(2^e+1)2^{e-1}$ and
$$\Delta(i)=\frac{2^e+i+1}{2(2^e+i)}\Delta(i-1)-\frac{2^{e-1}}{(2^e+i)i}f(i-1).$$
Applying this iteratively, we obtain
\begin{equation}\label{induct}\Delta(i)=(2^e+i+1)(A2^{e-i-1}-\sum_{j=0}^{i-1}\frac{2^{e+j-i}f(j)}{(j+1)(2^e+j+2)(2^e+j+1)}).\end{equation}
Thus it suffices to prove $\nu_2(f(j))\ge-j-1+\nu_2((j+1)^2(j+2))$. This can be easily checked for $j\le3$. (It is sharp for $j=0$ and $j=2$.)

Since $\nu_2\binom ji\le[\log_2(j)]$, we deduce $\nu_2(f(j))\ge-[\log_2(j)]$. Since $j+1\ge[\log_2(j)]+2\nu_2(j+1)+\nu_2(j+2)$ for $j\ge4$, as is easily
proved, the desired result follows.
\end{proof}

Note how a comparison of (\ref{2^e+i}) and Proposition \ref{fce} points to a huge difference between the situations when $p$ is an even or odd prime.

\section{Proofs relevant to the case $\nu_p(f(c-1))=2$}\label{23sec}
In this section, we prove Proposition \ref{p^3} and Lemma \ref{eind}.
  The following lemmas will be useful in the proof of Proposition \ref{p^3}.
\begin{lem}\label{c-1} If $f(c-1)\equiv0\ (p)$, then $\ds\sum_{i=1}^{c-1} i\tbinom {c-1}i^{-1}\equiv0\ (p)$.\end{lem}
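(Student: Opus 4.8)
The plan is to prove the stronger \emph{exact} identity
$$\sum_{i=1}^{c-1} i\tbinom{c-1}i^{-1}=\tfrac{c-1}2\,f(c-1)$$
and then reduce modulo $p$; the divisibility in the lemma will be immediate once this closed form is in hand. Set $m=c-1$ and let $S=\ds\sum_{i=0}^{m} i\tbinom mi^{-1}$. Since the $i=0$ summand is $0$, $S$ is exactly the quantity appearing in the lemma, so it suffices to show $S=\tfrac m2 f(m)$.

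The main (and essentially only) maneuver is the symmetry $\tbinom mi=\tbinom m{m-i}$, hence $\tbinom mi^{-1}=\tbinom m{m-i}^{-1}$. Reindexing the sum by $i\mapsto m-i$ gives
$$S=\sum_{i=0}^{m}(m-i)\tbinom m{m-i}^{-1}=\sum_{i=0}^{m}(m-i)\tbinom mi^{-1}=m\,f(m)-S,$$
where the last step uses $f(m)=\ds\sum_{i=0}^m\tbinom mi^{-1}$. Therefore $2S=m\,f(m)=(c-1)f(c-1)$.

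Because the surrounding results (in particular Proposition \ref{p^3}, which invokes this lemma) concern an odd prime $p$, the element $2$ is a unit in $\Z_p$, so we may divide to get $S=\tfrac{c-1}2 f(c-1)$ as $p$-adic integers. The hypothesis $f(c-1)\equiv0\ (p)$ then forces $S\equiv0\ (p)$, which is the claim. I expect no genuine obstacle here: the only point requiring comment is the invertibility of $2$, which is exactly why the argument is restricted to odd $p$, and the symmetry reindexing is a routine finite manipulation that needs no estimate on $\nu_p$ of individual terms.
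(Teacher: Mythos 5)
Your proof is correct, and it establishes exactly the same strengthened statement as the paper does, namely the exact identity $\sum_{i=1}^{c-1} i\tbinom{c-1}i^{-1}=\tfrac12(c-1)f(c-1)$; only the derivation of that identity differs. The paper gets it from the absorption identity $(i+1)\tbinom{c-1}i^{-1}=c\tbinom c{i+1}^{-1}$, which upon summing gives $f(c-1)+\sum i\tbinom{c-1}i^{-1}=c(f(c)-1)$, and then invokes the recursion (\ref{rec1}) to evaluate $c(f(c)-1)=\tfrac{c+1}2f(c-1)$. You instead use the symmetry $\tbinom mi=\tbinom m{m-i}$ and the reindexing $i\mapsto m-i$ to get $2S=mf(m)$ directly. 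Your route is more self-contained and arguably more elementary, since it needs no input beyond the symmetry of binomial coefficients; the paper's route has the minor virtue of reusing machinery ((\ref{rec1})) that is already central to the rest of the argument. Your closing remark about the invertibility of $2$ (and implicitly of the oddness of $p$) is apt: the lemma as stated does not mention $p$ being odd, but it is only applied for odd $p$, and the paper's own proof carries the same implicit restriction since it produces the identical factor $\tfrac12(c-1)$.
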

\begin{proof} We will prove the stronger result that $\sum i\binom {c-1}i^{-1}=\frac12(c-1)f(c-1)$. This is deduced from the following, where we use (\ref{rec1}) in the last step.
$$f(c-1)+\sum i\tbinom{c-1}i^{-1}=c\sum \tbinom c{i+1}^{-1}=c(f(c)-1)=c\tfrac{c+1}{2c}f(c-1).$$
\end{proof}
\begin{lem}\label{ABCD} If $0\le C\le A<p$ and $0\le D\le B<p$, then, mod $p^2$,
\begin{equation}\label{ABeq}\tbinom{Ap+B}{Cp+D}^{-1}-\tbinom AC^{-1}\tbinom BD^{-1}\equiv p\tbinom AC^{-1}\tbinom
BD^{-1}(C\sum_{i=1}^D{\tfrac1i}+(A-C)\sum_{i=1}^{B-D}{\tfrac1i}-A\sum_{i=1}^B{\tfrac1i}).\end{equation}
\end{lem}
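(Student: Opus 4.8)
The plan is to prove the ``uninverted'' version of the congruence for $\binom{Ap+B}{Cp+D}$ modulo $p^2$ and then invert, which is legitimate because $\binom AC$ and $\binom BD$ are both units modulo $p$: since $0\le C\le A<p$ and $0\le D\le B<p$, no carries occur in the base-$p$ additions, so $\nu_p\binom AC=\nu_p\binom BD=0$. Thus it suffices to show
$$\binom{Ap+B}{Cp+D}\equiv\binom AC\binom BD\bigl(1+pH\bigr)\pmod{p^2},\qquad H:=A\sum_{i=1}^B\tfrac1i-C\sum_{i=1}^D\tfrac1i-(A-C)\sum_{i=1}^{B-D}\tfrac1i,$$
and then to invert.

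First I would write $\binom{Ap+B}{Cp+D}=(Ap+B)!/\bigl((Cp+D)!\,((A-C)p+(B-D))!\bigr)$ and split each factorial into a ``head'' and a ``tail,'' namely $(Ap+B)!=(Ap)!\prod_{i=1}^B(Ap+i)$ and similarly for the other two. The three heads assemble into $\binom{Ap}{Cp}$, while the tails assemble into the single ratio $\prod_{i=1}^B(Ap+i)\big/\bigl(\prod_{i=1}^D(Cp+i)\prod_{i=1}^{B-D}((A-C)p+i)\bigr)$. For the heads I would invoke the standard mod-$p^2$ congruence $\binom{Ap}{Cp}\equiv\binom AC\pmod{p^2}$. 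For each tail I would factor out the integer part: every factor has the form $Mp+i$ with $1\le i<p$, so $Mp+i=i(1+Mp/i)$ with $i$ invertible mod $p$, and $\prod_i(1+Mp/i)\equiv 1+Mp\sum_i 1/i\pmod{p^2}$ since all cross terms are $O(p^2)$. This yields
$$\prod_{i=1}^B(Ap+i)\equiv B!\Bigl(1+Ap\sum_{i=1}^B\tfrac1i\Bigr)\pmod{p^2},$$
and the analogous statements for the $D$- and $(B-D)$-tails.

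Combining, the ratio of tails becomes $\binom BD\bigl(1+Ap\sum_1^B\tfrac1i\bigr)\bigl(1-Cp\sum_1^D\tfrac1i\bigr)\bigl(1-(A-C)p\sum_1^{B-D}\tfrac1i\bigr)\equiv\binom BD(1+pH)$ modulo $p^2$, which together with $\binom{Ap}{Cp}\equiv\binom AC$ gives the displayed congruence for $\binom{Ap+B}{Cp+D}$. Inverting, $(1+pH)^{-1}\equiv 1-pH\pmod{p^2}$ gives $\binom{Ap+B}{Cp+D}^{-1}\equiv\binom AC^{-1}\binom BD^{-1}(1-pH)$, so subtracting $\binom AC^{-1}\binom BD^{-1}$ leaves $-pH\binom AC^{-1}\binom BD^{-1}$; since $-H=C\sum_{i=1}^D\tfrac1i+(A-C)\sum_{i=1}^{B-D}\tfrac1i-A\sum_{i=1}^B\tfrac1i$, this is exactly the claimed expression. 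I expect the only nonroutine ingredient to be the head congruence $\binom{Ap}{Cp}\equiv\binom AC\pmod{p^2}$ (a Jacobsthal-type result, which can instead be obtained by running the same head/tail splitting recursively); everything else is bookkeeping with geometric-series expansions that discard $O(p^2)$ terms, and I would only need to be careful that every harmonic sum $\sum 1/i$ runs over $i<p$ so that all denominators are units modulo $p$, and that the degenerate cases $D=B$, $C=A$, or $D=0$ give empty products and sums as expected.
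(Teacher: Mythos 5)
Your proof is correct and follows essentially the same route as the paper's: the paper works directly with the reciprocal $\frac{(Cp+D)!\,((A-C)p+(B-D))!}{(Ap+B)!}$, cancels the full blocks of $p-1$ consecutive non-multiples of $p$ (which is exactly your head congruence $\binom{Ap}{Cp}\equiv\binom AC\bmod p^2$ in disguise), and then expands the same ratio of tails $\prod(1+Mp/i)$ modulo $p^2$. Your only deviation --- proving the uninverted congruence first and inverting $1+pH$ at the end --- is cosmetic.
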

\begin{proof} Since, for any $j$, $(pj+p-1)\cdots(pj+1)\equiv(p-1)!\mod {p^2}$, in evaluating
\begin{equation}\frac{(Cp+D)!((A-C)p+(B-D))!}{(Ap+B)!} \mod p^2,\label{B-D}\end{equation}
all the products which appear in between two multiples of $p$ cancel out. We cancel out a factor of $p$ from all multiples
of $p$ in the numerator and denominator of (\ref{B-D}) and obtain that (\ref{B-D}) is congruent to
$$\frac{C!(A-C)!}{A!}\cdot\frac{(Cp+D)\cdots(Cp+1)((A-C)p+(B-D))\cdots((A-C)p+1)}{(Ap+B)\cdots(Ap+1)}.$$
The second factor here equals $\frac{D!(B-D)!}{B!}\cdot E$, where
$$E=\frac{(\frac CDp+1)\cdots(\frac C1p+1)(\frac{A-C}{B-D}p+1)\cdots(\frac{A-C}1p+1)}{(\frac ABp+1)\cdots(\frac A1p+1)}.$$
The LHS of (\ref{ABeq}) is congruent, mod $p^2$, to $\binom AC^{-1}\binom BD^{-1}(E-1)$, and this is congruent to the claimed expression.
\end{proof}
\begin{proof}[Proof of Proposition \ref{p^3}] We are assuming that $\sum\binom{c-1}i^{-1}\equiv0$ mod $p$. We will prove that for $1\le j\le\frac{p-1}2$,
\begin{equation}\label{ip+2j} \sum_{i=0}^{c-1}\biggl(\frac1{ip+2j}\frac1{\binom{cp-1}{ip+2j}}-\frac1{2j\binom{c-1}i\binom{p-1}{2j}}\biggr)\equiv0\mod p^2.\end{equation}
Since
$$\binom{cp-1}{ip+2j-1}^{-1}+\binom{cp-1}{ip+2j}^{-1}=\frac{cp}{(ip+2j)\binom{cp-1}{ip+2j}},$$
when (\ref{ip+2j}) is summed over $j$ and multiplied by $cp$, we obtain
\begin{equation}\label{p3}\sum_{k\not\equiv0\ (p)}\binom{cp-1}k^{-1}\equiv c\sum_{k=1}^{p-1}\frac1{\binom{p-1}k}\sum_{i=0}^{c-1}\binom{c-1}i^{-1}\mod p^3.
\end{equation}
By Lemma \ref{newlem}
\begin{equation}\label{0modp^3}\binom{cp-1}{pj}^{-1}-\binom{c-1}j^{-1}\equiv0\mod p^3.\end{equation}
Adding (\ref{p3}) and the sum over $j$ of (\ref{0modp^3}) and using Lemma \ref{1-2} yields the claim of the proposition.

The proof of (\ref{ip+2j}) begins by noting that the LHS equals
\begin{equation}\label{LHS}\frac 1{cp-1}\sum_{i=0}^{c-1}\binom{cp-2}{ip+2j-1}^{-1}-\frac1{(p-1)\binom{p-2}{2j-1}}\sum_{i=0}^{c-1}\binom{c-1}i^{-1}.
\end{equation}
Since we are working mod $p^2$ and assuming $\sum\binom{c-1}i^{-1}\equiv0$ mod $p$, we may replace the $(p-1)$ in its coefficient by $-1$.
We will show
\begin{equation}\label{2} -\sum_{i=0}^{c-1}\binom{cp-2}{ip+2j-1}^{-1}+\binom{p-2}{2j-1}^{-1}\sum_{i=0}^{c-1}\binom{c-1}i^{-1}\equiv0\mod p^2.\end{equation}
This implies that $\ds\sum_{i=0}^{c-1}\tbinom{cp-2}{ip+2j-1}^{-1}\equiv0\ (p)$, and so whether its coefficient is $-1$ or $\frac1{cp-1}$ is irrelevant.
Thus (\ref{2}) implies (\ref{ip+2j}).

To prove (\ref{2}), we note that $-1$ times its LHS is a sum of terms of the form of (\ref{ABeq}) with $A=c-1$, $B=p-2$, $C=i$, and $D=2j-1$.
Let $S_m:=\ds\sum_{i=1}^m\tfrac 1i$. By Lemma \ref{ABCD}, the LHS of (\ref{2}) equals $-1$ times
\begin{eqnarray*}&&p{\tbinom{p-2}{2j-1}^{-1}}\sum_{i=0}^{c-1}\tbinom{c-1}i^{-1}\bigl(iS_{2j-1}+(c-1-i)S_{p-1-2j}-(c-1)S_{p-2}\bigr)\\
&=&p{\tbinom{p-2}{2j-1}^{-1}}\sum_{i=0}^{c-1}\tbinom{c-1}i^{-1}(Ki+L),\end{eqnarray*}
where $K$ and $L$ do not depend on $i$. This is 0 mod $p^2$, using Lemma \ref{c-1} and the assumption that $\nu_p(f(c-1))>0$.

\end{proof}

The remainder of the paper is devoted to the proof of Lemma \ref{eind}.
Let $D_e:=f(cp^e+up^{e-1}-1)-f(up^{e-1}-1)$. We will show that $\nu_p(D_{e+1}-D_e)\ge2$ for $e\ge1$. Using (\ref{ind}), one easily shows that
 $\nu_p(D_1)\ge1$. We deduce that $\nu_p(D_e)\ge1$ for all $e$.

 Let
$$L_e:=\sum_{i< up^{e-1}}\biggl(\binom{cp^e+up^{e-1}-1}i^{-1}-\binom{up^{e-1}-1}i^{-1}\biggr),\text{ and }H_e:=\sum_{i\ge up^{e-1}}\binom{cp^e+up^{e-1}-1}i^{-1}.$$
Then $D_e=L_e+H_e$. We will prove $\nu_p(L_{e+1}-L_e)\ge2$ and $\nu_p(H_{e+1}-H_e)\ge2$.

We have
\begin{eqnarray}&&\nonumber L_{e+1}-L_e\\
&=&\sum_{i=0}^{up^{e-1}-1}\biggl(\binom{cp^{e+1}+up^{e}-1}{pi}^{-1}-\binom{cp^e+up^{e-1}-1}i^{-1}\label{d1}\\
&&\qquad\qquad-\biggl(\binom{up^e-1}{pi}^{-1}-\binom{up^{e-1}-1}i^{-1}\biggr)\biggr)\label{d2}\\
&&+\sum_{k\not\equiv0\ (p)}^{k<up^e}\biggl(\binom{cp^{e+1}+up^{e}-1}k^{-1}-\binom{up^e-1}k^{-1}\biggr)\label{d3}.\end{eqnarray}
Summands of (\ref{d1}) and (\ref{d2}) are divisible by $p^2$ by Lemma \ref{newlem}.  Similarly to the beginning of the proof of \ref{p^e-1},
the sum (\ref{d3}) is a sum of terms of the form
\begin{equation}\label{d4}\frac{cp^{e+1}+up^e}{2j}\binom{cp^{e+1}+up^e-1}{2j}^{-1}-\frac{up^e}{2j}\binom{up^e-1}{2j}^{-1}\end{equation}
with $j\not\equiv0$ mod $p$. This is clearly divisible by $p^2$ when $e>1$. If $e=1$, the $cp^{e+1}$-part is 0 mod $p^2$, and $\binom{cp^{e+1}+up^e-1}{2j}\equiv\binom{up^e-1}{2j}$
mod $p$, which is good enough, since there is an additional factor of $p$. Thus (\ref{d4}) is 0 mod $p^2$ in this case, too.

Finally,
\begin{eqnarray}&&H_{e+1}-H_e\nonumber\\
&=&\sum_{i=up^{e-1}}^{cp^e+up^{e-1}-1}\binom{cp^{e+1}+up^e-1}{pi}^{-1}-\binom{cp^e+up^{e-1}-1}i^{-1}\label{h1}\\
&&+\sum_{k\not\equiv0\ (p)}^{k\ge up^e}\binom{cp^{e+1}+up^e-1}k^{-1}\label{h2}.\end{eqnarray}
The summands of (\ref{h1}) are divisible by $p^{e+1}$. To see this, we use the proof of Lemma \ref{newlem} but note that the factor
$\binom{cp^e+up^{e-1}-1}i^{-1}$ will have $\nu_p(-)=-1$.

Similarly to the proof of \ref{p^3}, (\ref{h2}) is a sum over $j$ of
$$\frac{cp^{e+1}+up^e}{cp^{e+1}+up^e-1}\sum_{i=up^{e-1}}^{cp^e+up^{e-1}-1}\binom{cp^{e+1}+up^e-2}{ip+2j-1}^{-1}.$$
Since the terms in the sum certainly have $\nu_p(-)\ge-1$, we are reduced to proving
\begin{equation}\label{f1}\sum_{i=u}^{cp+u-1}\binom{cp^2+up-2}{ip+2j-1}^{-1}\equiv0\mod p,\text{ and}\end{equation}
\begin{equation}\label{f2}\nu_p\biggl(\sum_{i=up }^{cp^2+up-1}\binom{cp^3+up^2-2}{ip+2j-1}^{-1}\biggr)\ge0.\end{equation}
We begin by proving (\ref{f1}).

We split the sum as
$$\sum_{a=1}^c\sum_{b=0}^{u-1}\binom{cp^2+(u-1)p+(p-2)}{ap^2+bp+2j-1}^{-1}+\sum_{a=0}^{c-1}\sum_{b=u}^{p-1}\binom{cp^2+(u-1)p+p-2}{ap^2+bp+2j-1}^{-1}.$$
The first sum, mod $p$, splits as $\ds\sum_{a=1}^c{\tbinom ca^{-1}}\cdot\sum_{b=0}^{u-1}{\tbinom{u-1}b^{-1}}\cdot\tbinom{p-2}{2j-1}^{-1}.$
The first factor is $f(c)-1=f(c-1)\frac{c+1}{2c}\equiv0$ mod $p^2$, and the other factors are $p$-integral.
With $S_m=\ds\sum_{i=1}^m\frac 1i$ and $\ell=2j-1$, we can prove, using methods similar to those in the proof of \ref{p^3}, that the term in the second sum above is congruent, mod $p$ to
$$\tbinom{cp+u-1}{ap+b}^{-1}\tbinom{p-2}{\ell}^{-1}(1+p(b(S_\ell-S_{p-2-\ell})+(u-1)(S_{p-2-\ell}-S_{p-2}))).$$
Thus the second sum becomes, mod $p$,
\begin{equation}\label{ab}\a\sum_{a=0}^{c-1}\sum_{b=u}^{p-1}{\tbinom {cp+u-1}{ap+b}^{-1}}+p\b\sum_{a=0}^{c-1}\sum_{b=u}^{p-1}b{\tbinom {cp+u-1}{ap+b}^{-1}}\end{equation}
with $\a$ and $\b$ $p$-integral.

We can prove that when $b\ge u$, mod $p^2$,
$$p\tbinom{cp+u-1}{ap+b}^{-1}\equiv\tfrac uc\tbinom{c-1}a^{-1}\tbinom bu\tbinom{p-1}{b-u}^{-1}(1+p(aS_b+(c-a-1)S_{u-b+p-1}-cS_{u-1})),$$
with $S_m$ as above. Thus $p$ times (\ref{ab}) is congruent, mod $p^2$, to
$$A\sum_{a=0}^{c-1}{\tbinom{c-1}a^{-1}}+Bp\sum_{a=0}^{c-1}a\tbinom{c-1}a^{-1},$$
with $A$ and $B$ $p$-integral. Since $\nu_p(f(c-1))=2$ and using Lemma \ref{c-1}, we obtain that this expression is divisible by $p^2$
and hence (\ref{ab}) is divisible by $p$, yielding the claim.

The proof of (\ref{f2}) is very similar. In fact, the sum here is $\equiv0$ mod $p$, stronger than required.
If we write the sum as
$$\sum_{i_0=0}^{p-1}\sum_{t=u}^{cp+u-1}\binom{cp^3+(u-1)p^2+(p^2-2)}{tp^2+(i_0p+\ell)}^{-1}$$
with $\ell=2j-1$ as before, then $\binom{p^2-2}{i_0p+\ell}$ behaves here as $\binom{p-2}{\ell}$ did before, and the $t$-sum is split
in the same way as the $i$-sum before.

\def\line{\rule{.6in}{.6pt}}

\end{document}